\tikzset{multimap/.tip={Glyph[glyph math command=multimap]},}
\crefname{equation}{}{}
\newcommand{\ruitje}{\hfill $\lozenge$}
\newcommand{\pf}{\rightharpoonup}
\newcommand{\denotes}{\!\downarrow}
\newcommand{\s}{\hspace{1pt}}
\newcommand{\mono}{\hookrightarrow}
\newcommand{\am}{\multimap}
\newcommand{\un}{\textup{<}}
\newcommand{\adj}{\textup{adj}}
\DeclareMathOperator{\id}{id}
\DeclareMathOperator{\downset}{\downarrow}
\renewcommand{\sf}[1]{\mathsf{#1}}
\newcommand{\kbar}{\overline{\sf{k}}}
\newcommand{\asm}{\mathsf{Asm}}
\newcommand{\rt}{\mathsf{RT}}
\newcommand{\reg}{\mathsf{REG}}
\newcommand{\set}{\mathsf{Set}}
\newtheorem{thm}{Theorem}[section]
\newtheorem{lem}[thm]{Lemma}
\newtheorem{cor}[thm]{Corollary}
\newtheorem{prop}[thm]{Proposition}
\theoremstyle{definition}
\newtheorem{defn}[thm]{Definition}
\newtheorem{rem}[thm]{Remark}
\newtheorem{ex}[thm]{Example}
\title{On the Existence of Pushouts of Realizability Toposes}
\author{Jetze Zoethout\\
Department of Mathematics, Utrecht University}
\date{November 17, 2020}
\begin{document}

\maketitle

\centerline{\textsc{Abstract}}
{\footnotesize\setlength{\parindent}{0pt}
We consider two preorder-enriched categories of ordered PCAs: $\sf{OPCA}$, where the arrows are functional morphisms, and $\sf{PCA}$, where the arrows are applicative morphisms. We show that $\sf{OPCA}$ has small products and finite biproducts, and that $\sf{PCA}$ has finite coproducts, all in a suitable 2-categorical sense. On the other hand, $\sf{PCA}$ lacks all nontrivial binary products. We deduce from this that the pushout, over $\set$, of two nontrivial realizability toposes is never a realizability topos.}

\section{Introduction}

This paper is concerned with two categories of \emph{ordered} partial combinatory algebras (OPCAs). First, we study $\sf{OPCA}$, introduced by J.\@ van Oosten and P.\@ Hofstra in \cite{hofstrajaap}, where the arrows are \emph{functional morphisms}. Second, we consider the category $\sf{PCA}$, where the arrows are \emph{applicative morphisms}. Restricting the latter to discrete, i.e., unordered OPCAs yields the category of PCAs first introduced by J.\@ Longley in \cite{longleyphd}. Even though this category greatly facilitates the study of PCAs, not much is known about its categorical structure. Indeed, the comprehensive monograph \cite{jaap} (p.\@ 28) states: `It should be stressed that the category [of PCAs] is not very well understood at the moment of writing'. That moment was more than a decade ago, and since then, progress has been made (see, e.g., the paper \cite{faberjaap} by E.\@ Faber and J.\@ van Oosten). However, there is one construction available in this category that, to my knowledge, has thus far escaped attention or at least publication in the literature. It turns out that the category of PCAs has finite coproducts. Their construction, in the slightly more general setting of ordered PCAs, is described in the current paper.

A more general version of this construction already appeared in the paper \cite{Z19}, which discusses a category of generalized (but unordered) PCAs. The construction of coproducts in $\sf{PCA}$ below (\cref{sec:prod_PCA}) is a special case of this more general setting. One reason for presenting the construction here as well is to enable one to understand the construction of coproducts of OPCAs without having to work their way through the generalized PCAs from \cite{Z19}. Another reason is that, as we shall see below, coproducts of OPCAs interact in an interesting way with \emph{products} of OPCAs. In \cite{Z19}, the situation with products is quite different, and requires one to work over other `base categories' than the topos $\set$ of sets. In this paper, we will work exclusively over the base category $\set$. In the category of sets, we will freely assume the Axiom of Choice (AC); we will indicate the occasions where it is used.

The categories $\sf{OPCA}$ and $\sf{PCA}$ are enriched over preorders, so they carry a (simple) 2-categorical structure. Moreover, in the final section, we will briefly consider the 2-category of regular categories, and the 2-category of toposes, so some remarks on 2-categorical terminology are in order. In general, we will append the prefix `pseudo-' to a term to indicate that we define this term in a `fully weak' 2-categorical sense. Most importantly, a pseudolimit will be a limit where cones need only commute up to (specified, coherent) isomorphism, and whose universal property is expressed by an \emph{equivalence} of categories, rather than an isomorphism. Of course, in the preorder-enriched case, the isomorphisms need not be specified, since they are unique anyway. Observe that a pseudopullback officially specifies \emph{three} projecion morphisms, rather than two; but this will not play an important role in this paper. Pseudocolimits are defined completely analogously. It turns out that the pseudoproducts we construct below are actually \emph{2-products}, meaning that their universal property \emph{is} expressed by an isomorphism of categories. Note that we do not use the adjective `strict' here. We will use the adjective `strict' at another occasion, however: a strict pseudoinitial object will be a pseudoinitial object 0 with the additional property that every arrow $A\to 0$ is an equivalence. Similarly, we will use the term `strict pseudoterminal object' for the dual notion. Another important use of the prefix `pseudo-' concerns monos and epis. A 1-cell $f$ is called a pseudomono if postcomposition with $f$ is fully faithful. In the preorder-enriched case, this simply means that postcomposition with $f$ reflects the order. For epis, a similar definition applies.

The paper is structured as follows. First of all, in \cref{sec:OPCA}, we define the category $\sf{OPCA}$ and state some of its elementary properties. In \cref{sec:prod_OPCA}, we show that $\sf{OPCA}$ has small pseudoproducts (which are in fact 2-products) and finite pseudocoproducts, which also yield finite pseudobiproducts. Next, in \cref{sec:PCA}, we construct the category $\sf{PCA}$ from $\sf{OPCA}$. \cref{sec:prod_PCA} shows that the finite pseudocoproducts in $\sf{OPCA}$ also yield finite pseudocoproducts in $\sf{PCA}$. On the other hand, nontrivial binary pseudoproducts (i.e., where both factors are not the pseudoterminal object) never exist in $\sf{PCA}$. Finally, in \cref{sec:ass}, we deduce from this that the pushout, over $\set$, of two nontrivial realizability toposes is never itself a realizability topos.

\section{Ordered PCAs}\label{sec:OPCA}

In this section, we introduce ordered partial combinatory algebras and morphisms between them. Since we will not state any new results here, we will describe the important constructions, but omit most proofs.

A partial combinatory algebra is a nonempty set $A$ equipped with a \emph{partial} binary application map $(a,b)\mapsto ab$. We think of the elements of $A$ simultaneously as inputs and as (codes of) algorithms that act on these inputs. The element $ab$ stands for the output, if any, when the algorithm (with code) $a$ is applied to $b$. Of course, in order to capture the intuition that the application map is computation, this map will need to satisfy certain axioms, to be specified below.

A useful generalization of partial combinatory algebras was introduced by P.\@ Hofstra and J.\@ van Oosten \cite{hofstrajaap}. Here, a partial combinatory algebra $A$ is also equipped with a partial order $\leq$. We can think of the statement $a'\leq a$ as expressing that $a'$ gives more information than $a$, or that $a'$ is a specialization of $a$. Of course, this order will need to be compatible with the application map. Let us make this explicit.

\begin{defn}\label{defn:OPAS}
An \emph{ordered partial applicative structure} (OPAS) is a poset $A=(A,\leq)$ equipped with a partial binary map $A\times A\pf A, (a,b)\mapsto ab$ satisfying the following axiom:
\begin{itemize}
\item[(0)]	if $a'\leq a$, $b'\leq b$ and $ab$ is defined, then $a'b'$ is also defined, and $a'b'\leq ab$. \ruitje
\end{itemize}
\end{defn}
In other words, if $a'$ and $b'$ contain at least as much information as $a$ and $b$, and $ab$ is already defined, then $a'b'$ must also be defined and give at least as much information as $ab$.

Before we proceed to define ordered partial combinatory algebras, some remarks on notation are in order. First of all, the application map will not be associative, meaning that expressions involving application need to be bracketed properly. In order to prevent illegible expressions, we adopt the convention that application associates to the left, writing $abc$ as an abbreviation for $(ab)c$. Moreover, we will sometimes write $a\cdot b$ instead of $ab$ if this is necessary to avoid confusion.

Since the application map is partial, we also introduce some notation dealing with partiality. If $e$ is a possibly undefined expression, then we write $e\denotes$ to indicate that $e$ is in fact defined. We take this to imply that all subexpressions of $e$ are defined as well. If $e$ and $e'$ are two possibly undefined expressions, then we write $e'\preceq e$ for the statement: if $e\denotes$, then $e'\denotes$ and $e'\leq e$. On the other hand, $e'\leq e$ always expresses the stronger statement that $e'$ and $e$ are defined and satisfy $e'\leq e$. Observe that axiom (0) can also be written as: if $a'\leq a$ and $b'\leq b$, then $a'b'\preceq ab$. Moreover, we write $e\simeq e'$ if both $e'\preceq e$ and $e\preceq e'$. In other words, $e\simeq e'$ expresses the Kleene equality of $e$ and $e'$, meaning that $e\denotes$ iff $e'\denotes$, and in this case, $e$ and $e'$ denote the same value. On the other hand, $e=e'$ will always mean that $e$ and $e'$ are defined and equal to each other.

\begin{defn}\label{defn:OPCA}
An OPAS $A$ is an \emph{ordered partial combinatory algebra} (OPCA) if there exist $\sf{k},\sf{s}\in A$ satisfying:
\begin{itemize}
\item[(1)]	$\sf{k}ab \leq a$;
\item[(2)]	$\sf{s}ab\denotes$;
\item[(3)]	$\sf{s}abc\preceq ac(bc)$. \ruitje
\end{itemize}
\end{defn}
OPCAs satisfy an abstract version of the $Smn$ Theorem for Turing computability on the natural numbers. In order to make this precise, we need the following definition.
\begin{defn}
Let $A$ be an OPCA. The set of \emph{terms} over $A$ is defined recursively as follows:
\begin{itemize}
\item[(i)]	We assume given a countably infinite set of disinct variables, and these are all terms.
\item[(ii)]	For every $a\in A$, we assume that we have a \emph{constant symbol} for $a$, and this is a term. The constant symbol for $a$ is simply denoted by $a$.
\item[(iii)]	If $t_0$ and $t_1$ are terms, then so is $(t_0t_1)$. \ruitje
\end{itemize}
\end{defn}
We omit brackets whenever possible, again subject to the convention that application associates to the left. Moreover, we may write $t_0\cdot t_1$ if needed to avoid confusion.

Clearly, every \emph{closed} term $t$ can be assigned a (possibly undefined) interpretation in $A$, which will also be denoted by $t$. If $t(\vec{x})$ is a term in $n$ free variables, then this term defines an obvious partial function $A^n\pf A$, which sends a tuple $\vec{a}\in A^n$ to (the interpretation of) $t(\vec{a})$, if defined. The key fact about OPCAs is the all such functions are computable using an algorithm present in $A$.
\begin{prop}[Combinatory completeness]\label{prop:combcomp}
Let $A$ be an OPCA. There exists a map that assigns, to each term $t(\vec{x},y)$ in $n+1$ variables, an element $\lambda^\ast \vec{x}y. t$ of $A$, satisfying:
\begin{itemize}
\item	$(\lambda^\ast \vec{x}y. t)\vec{a}\denotes$;
\item	$(\lambda^\ast \vec{x}y. t)\vec{a}b\preceq t(\vec{a},b)$,
\end{itemize}
for all $\vec{a}\in A^n, b\in A$.
\end{prop}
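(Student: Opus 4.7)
The strategy is the classical combinatory completeness argument, adapted to the partial, ordered setting. I would first handle single-variable abstraction $\lambda^\ast y. t$ by recursion on the structure of $t$: set $\lambda^\ast y. y := \sf{s}\sf{k}\sf{k}$; set $\lambda^\ast y. c := \sf{k} c$ whenever $c$ is either a constant or a variable distinct from $y$; and set $\lambda^\ast y.(t_0 t_1) := \sf{s}(\lambda^\ast y. t_0)(\lambda^\ast y. t_1)$. Note that $\lambda^\ast y. t$ is a new term whose free variables are those of $t$ minus $y$.

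Next I would verify, by a second induction on $t$, two facts: (i) every closing instantiation of $\lambda^\ast y. t$ is defined in $A$, and (ii) for every $b \in A$ and every closing instantiation of the remaining free variables, $(\lambda^\ast y. t) \cdot b \preceq t[b/y]$. For (i), $\sf{s}\sf{k}\sf{k}$ is defined by axiom~(2); $\sf{k} c$ is defined because it is a subexpression of $\sf{k} c d$, which axiom~(1) asserts to be defined; and $\sf{s}u_0 u_1$ is defined by axiom~(2) once the inductive hypothesis provides $u_0$ and $u_1$. For (ii), the variable and constant cases follow directly from axiom~(1); for the application case, axiom~(3) gives $(\lambda^\ast y.(t_0 t_1)) \cdot b \preceq (\lambda^\ast y. t_0)b \cdot (\lambda^\ast y. t_1)b$, and the induction hypotheses together with monotonicity (axiom~(0)) yield $\preceq t_0[b/y] \cdot t_1[b/y] = (t_0 t_1)[b/y]$.

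For the general case, I would define multi-variable abstraction by iteration: $\lambda^\ast \vec{x} y.\, t := \lambda^\ast x_1. (\lambda^\ast x_2 \cdots x_n y.\, t)$. A short induction on $n$ then gives both bullets of the proposition. Applying the outermost abstraction to $a_1$ yields $(\lambda^\ast \vec{x} y.\, t) a_1 \preceq (\lambda^\ast x_2 \cdots x_n y.\, t)[a_1/x_1]$, which coincides with $\lambda^\ast x_2 \cdots x_n y.\, t[a_1/x_1]$ since the construction commutes with substitution in the remaining free variables; the inductive hypothesis then supplies the definedness of $(\lambda^\ast \vec{x} y.\, t)\vec{a}$ and, after transitivity of $\preceq$ together with (0), the approximation $(\lambda^\ast \vec{x} y.\, t)\vec{a}b \preceq t(\vec{a}, b)$.

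The only real subtlety is careful bookkeeping with partiality: at each step one must distinguish expressions that are merely approximated via $\preceq$ (and hence possibly undefined) from those that are genuinely defined in $A$, since only defined values can be fed back into the application map at the next step. Axiom~(2) is precisely the reservoir of definedness that keeps the construction running at every stage of the recursion.
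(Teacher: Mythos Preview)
Your proposal is correct and is precisely the standard argument the paper has in mind: the paper omits the proof entirely, merely pointing to Theorem~1.1.3 of \cite{jaap} and noting that the construction depends only on a choice of $\sf{k}$ and $\sf{s}$. Your recursive definition of $\lambda^\ast$ via $\sf{skk}$, $\sf{k}c$, and $\sf{s}(\lambda^\ast y.t_0)(\lambda^\ast y.t_1)$, together with the induction establishing definedness (from axiom~(2)) and the $\preceq$-inequality (from axioms~(0), (1), (3)), is exactly that adaptation.
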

The proof is an easy adaptation of the proof of Theorem 1.1.3 in \cite{jaap}, and is omitted. It is worth mentioning that the map $t(\vec{x},y) \mapsto \lambda^\ast \vec{x}y.t$ can be constructed explicitly and only requires a choice for $\sf{k}$ and $\sf{s}$ as in \cref{defn:OPCA}.

The elements $\sf{k}$ and $\sf{s}$ are usually called \emph{combinators}. Using $\sf{k}$, $\sf{s}$ and \cref{prop:combcomp}, we can construct additional useful combinators. For our purposes, the combinators $\sf{i} = \sf{skk}$, $\kbar = \sf{ki}$, $\sf{p} = \lambda^* xyz.zxy$, $\sf{p}_0 = \lambda^* x.x\sf{k}$ and $\sf{p}_1 = \lambda^* x.x\kbar$ will be relevant. These satisfy:
\[
\sf{i}a\leq a, \quad \kbar ab\leq b, \quad \sf{p}_0(\sf{p}ab)\leq a \quad\mbox{and}\quad \sf{p}_1(\sf{p}ab)\leq b.
\]
The combinators $\sf{k}$ and $\kbar$ also serve as \emph{booleans}, meaning that there exists a case combinator $\sf{C}\in A$ satisfying $\sf{Ck}ab\leq a$ and $\sf{C}\kbar ab\leq b$. Observe that we may simply take $\sf{C}=\sf{i}$.

\begin{rem}
Even though $\sf{k}$ and $\sf{s}$ are not part of the structure of an OPCA, we will assume that, for each OPCA we discuss, we have made an explicit choice for $\sf{k}$ and $\sf{s}$. Observe that this also yields a choice for the other combinators constructed above. If one has a lot of OPCAs, then this may require the Axiom of Choice; this situation will occur in the proof of \cref{prop:prod_OPCA}. \ruitje
\end{rem}

\begin{ex}
The prototypical example is the (discretely ordered) OPCA $\mathcal{K}_1$, known as Kleene's first model. Its underlying set is the set of natural numbers, and $mn$ is the result, if any, when the $m$-th partial recursive function is applied to $n$. \ruitje
\end{ex}

\begin{ex}
Any poset with binary meets is an OPCA, where application is given by meet. These are examples of \emph{pseudotrivial} OPCAs (\cite{hofstrajaap}, Definition 2.3), i.e., OPCAs where any two elements have a common lower bound. This notion will not play a large role in this paper; we will need it only in \cref{ex:cd_prod_OPCA} below. \ruitje
\end{ex}

We now proceed to define maps between OPCAs.
\begin{defn}\label{defn:morph}
Let $A$ and $B$ be OPCAs. A \emph{morphism of OPCAs} is a function $f\colon A\to B$ satisfying the following requirements:
\begin{itemize}
\item	there exists a $t\in B$ such that $t\cdot f(a)\cdot f(a') \preceq f(aa')$;
\item	there exists a $u\in B$ such that $u\cdot f(a')\leq f(a)$ whenever $a'\leq a$.
\end{itemize}
We say that $t$ \emph{tracks} $f$ and that $f$ preserves the order up to $u$. \ruitje
\end{defn}
\begin{defn}\label{defn:order_morph}
Let $A$ and $B$ be OPCAs and consider two functions $f,f'\colon A\to B$. We say that $f\leq f'$ if there exists an $s\in B$ such that $s\cdot f(a)\leq f'(a)$ for all $a\in A$. Such an $s\in B$ is said to \emph{realize} the inequality $f\leq f'$. Moreover, we write $f\simeq f'$ if both $f\leq f'$ and $f'\leq f$. \ruitje
\end{defn}

\begin{prop}\label{prop:OPCA}
OPCAs, morphisms of OPCAs and inequalities between them form a preorder-enriched category $\sf{OPCA}$.
\end{prop}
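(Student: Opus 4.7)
The proof is a routine verification; all three axioms to check (identities, composition, preorder structure) reduce to constructing suitable elements of the codomain OPCA via combinatory completeness (\cref{prop:combcomp}).

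\textbf{Identities and composition.} For each OPCA $A$, the identity function $\id_A$ is a morphism: a tracker is $\lambda^\ast xy.xy$, which by \cref{prop:combcomp} satisfies $(\lambda^\ast xy.xy)ab\preceq ab$, and an order-preservation witness is $\sf{i}$, since $\sf{i}a'\leq a'\leq a$ whenever $a'\leq a$. For composition, set $(g\circ f)(a):=g(f(a))$. Writing $t_f,u_f$ (resp.\@ $t_g,u_g$) for a tracker and order witness of $f$ (resp.\@ $g$), the idea is to compute $g(t_f\cdot f(a)\cdot f(a'))$ inside the codomain by applying $t_g$ twice to $g(t_f),g(f(a)),g(f(a'))$, and then to use $u_g$ to pass from $g(t_f f(a)f(a'))$ down to something below $g(f(aa'))$. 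This gives the tracker $t=\lambda^\ast xy.\,u_g(t_g(t_g\,g(t_f)\,x)\,y)$; an analogous construction handles the order witness. Associativity and the identity laws then follow immediately from the corresponding facts for function composition in $\set$.

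\textbf{Preorder enrichment.} Reflexivity of $\leq$ on each hom-set is witnessed by $\sf{i}$. For transitivity, if $s$ realizes $f\leq f'$ and $s'$ realizes $f'\leq f''$, then $\lambda^\ast x.\,s'(sx)$ realizes $f\leq f''$, using axiom (0) to conclude $s'(sf(a))\leq s'f'(a)\leq f''(a)$. For monotonicity of horizontal composition, whiskering a realizer on the left by a morphism $g$ is obtained by combining the realizer with $t_g$ and $u_g$ in the same spirit as the composition tracker above, while whiskering on the right by $f$ is trivially realized by the same element, since the realizer works pointwise on all inputs $f(a)$.

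\textbf{Where the care is needed.} The only genuine subtlety is partiality bookkeeping. The tracker condition for $f$ only yields information about $t_f f(a)f(a')$ once we know $aa'$ is defined, whereas a realizer $s$ of an inequality $f\leq f'$ (and an order-preservation witness applied to a comparable pair) always delivers \emph{defined} values, by axiom (0). When chaining these facts together, one must verify that each intermediate subexpression in the proposed $\lambda^\ast$-term is defined before invoking axiom (0) to compare it with another. This is exactly what makes the explicit expressions above yield total trackers and the correct conditional inequalities, and is essentially the only thing to keep track of during the write-up.
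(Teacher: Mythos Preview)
Your sketch is correct and is exactly the standard verification one would carry out; the paper itself omits the proof entirely (as announced at the start of \cref{sec:OPCA}), so there is no alternative argument to compare against. The one point worth making explicit in a full write-up is that when $aa'\denotes$, the subexpression $t_f\cdot f(a)$ is automatically defined (since $t_f\cdot f(a)\cdot f(a')\denotes$ forces all subexpressions to denote), which is what licenses the first application of $t_g$ in your composition tracker---you allude to this under ``where the care is needed'', and it is indeed the only place one can go wrong.
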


We will be espacially interested in morphisms with the following property, introduced in \cite{hofstrajaap}.
\begin{defn}\label{defn:cd}
Let $f\colon A\to B$ be a morphism of OPCAs. We say that $f$ is \emph{computationally dense} (c.d.) if there exists an $n\in B$ satisfying:
\begin{align}\tag{cd}\label{eq:cd}
\forall s\in B\s \exists r\in A\s (n\cdot f(r)\leq s).
\end{align}
\ \hfill $\lozenge$
\end{defn}

In \cref{sec:prod_PCA}, we will also need the following notion.
\begin{defn}
A morphism of OPCAs $f\colon A\to B$ is called \emph{discrete} if, for any subset $X\subseteq A$, we have: if $f(X) = \{f(a)\mid a\in X\}$ has a lower bound in $B$, then $X$ has a lower bound in $A$. \ruitje
\end{defn}

We list some elementary properties of computational density and discreteness, which we leave to the reader to prove.
\begin{prop}\label{prop:cd}
Let $A\stackrel{f}{\longrightarrow} B\stackrel{g}{\longrightarrow} C$ be morphisms of OPCAs.
\begin{itemize}
\item[\textup{(}i\textup{)}]	If $f$ and $g$ are c.d., then $gf$ is c.d.\@ as well.
\item[\textup{(}ii\textup{)}]	If $gf$ is c.d., then $g$ is c.d.\@ as well.
\item[\textup{(}iii\textup{)}]	If $gf$ is discrete, then $f$ is discrete as well.
\item[\textup{(}iv\textup{)}]	Computational density and discreteness are downwards closed. That is, if $f$ is c.d.\@ (resp.\@ discrete) and $f'\leq f$ is a morphism of OPCAs, then $f'$ is also c.d.\@ (resp.\@ discrete).
\end{itemize}
In particular, left adjoints are c.d., and right adjoints are discrete.
\end{prop}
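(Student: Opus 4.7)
The plan is to prove (i)–(iv) directly by combinatory manipulation, using combinatory completeness (\cref{prop:combcomp}) together with axiom (0) of \cref{defn:OPAS} to build the required combinators or lower bounds; the ``in particular'' clause then falls out by applying these same techniques to the counit of an adjunction.

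For (i), fix c.d.\ combinators $n_f$ for $f$ and $n_g$ for $g$, a tracker $t$ for $g$ and an order-preservation witness $u$ for $g$. Given $s\in C$, apply c.d.\ of $g$ to obtain $r_g\in B$ with $n_g\cdot g(r_g)\leq s$, and then c.d.\ of $f$ to obtain $r\in A$ with $n_f\cdot f(r)\leq r_g$. Unwinding the tracker and order-preservation inequalities for $g$, one checks that
\[
N \,:=\, \lambda^\ast x.\, n_g\cdot\bigl(u\cdot(t\cdot g(n_f)\cdot x)\bigr)\in C
\]
satisfies $N\cdot(gf)(r)\leq s$. Since $N$ does not depend on $s$, this is the desired combinator for $gf$. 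Part (ii) is essentially immediate: any c.d.\ combinator $N$ for $gf$ also witnesses c.d.\ of $g$, by taking $r':=f(r)$.

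For (iii), let $X\subseteq A$ with $b\in B$ a lower bound of $f(X)$. The empty case is trivial (since $A$ is nonempty), so assume $X\neq\emptyset$ and pick $a_0\in X$. The inequality $b\leq f(a_0)$ forces $u\cdot g(b)$ to be defined, where $u$ witnesses order preservation of $g$, and then $u\cdot g(b)\leq (gf)(a)$ for every $a\in X$; discreteness of $gf$ now yields a lower bound of $X$. Part (iv) is similar: if $s\in B$ realizes $f'\leq f$, then $\lambda^\ast x.\, n\cdot(s\cdot x)$ turns a c.d.\ combinator $n$ for $f$ into one for $f'$, and a lower bound $b$ of $f'(X)$ in $B$ yields, for nonempty $X$, the lower bound $s\cdot b$ of $f(X)$; discreteness of $f$ then gives the desired lower bound of $X$ in $A$.

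For the ``in particular'' clause, assume $f\dashv g$, with counit $fg\leq \id_B$ realized by $n\in B$. Then $n\cdot f(g(s))\leq s$ for every $s\in B$, so $n$ is a c.d.\ combinator for $f$ (take $r:=g(s)$). For discreteness of $g$, given $X\subseteq B$ with lower bound $a\in A$ of $g(X)$, apply the order-preservation combinator $u$ of $f$ to obtain $u\cdot f(a)\leq (fg)(b)$ for each $b\in X$, and then the counit to get $n\cdot u\cdot f(a)\leq b$; this common value (well-defined provided $X\neq\emptyset$) is the required lower bound in $B$. The only subtle point throughout is the careful bookkeeping of definedness of partial expressions, which is what drives the case split on whether $X$ is empty in (iii), (iv) and the adjunction argument.
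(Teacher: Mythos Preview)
Your proof is correct. Note, however, that the paper itself does not provide a proof of this proposition; it is explicitly left to the reader, so there is no ``paper's own proof'' to compare against. Two minor remarks on your write-up:

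\begin{itemize}
\item In the adjunction argument you write $n\cdot u\cdot f(a)$, which under the paper's left-association convention means $(n\cdot u)\cdot f(a)$; you of course mean $n\cdot(u\cdot f(a))$, so insert the parentheses.
\item Your direct proof of the ``in particular'' clause is fine, but observe that it is literally a corollary of (ii)--(iv), which is presumably why the paper phrases it that way: the identity $\id_B$ is trivially both c.d.\ and discrete; from the counit $fg\leq\id_B$ and (iv) we get that $fg$ is c.d.\ and discrete; then (ii) applied to $B\stackrel{g}{\to}A\stackrel{f}{\to}B$ yields that $f$ is c.d., and (iii) yields that $g$ is discrete. This saves you the repeated bookkeeping with $u$ and the definedness case split.
\end{itemize}
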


The definition of computational density in \cref{defn:cd} is not the original definition from \cite{hofstrajaap}, but rather a simplified version introduced by P.\@ Johnstone. The following proposition provides the original definition from \cite{hofstrajaap}, which we will need later on.
\begin{prop}[\cite{johnstone}, Lemma 2.3]\label{prop:cdm}
A morphism of OPCAs $f\colon A\to B$ is c.d.\@ if and only if there exists an $m\in B$ satisfying:
\begin{align}\tag{cdm}\label{eq:cdm}
\forall s\in B\s \exists r\in A\s\forall a\in A\ (m\cdot f(ra)\preceq s\cdot f(a)).
\end{align}
In fact, any $m\in B$ satisfying \cref{eq:cdm} also satisfies \cref{eq:cd}.
\end{prop}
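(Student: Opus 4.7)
The plan is to prove (cd) $\Leftrightarrow$ (cdm) by handling the two directions separately, starting with (cdm) $\Rightarrow$ (cd), since that argument simultaneously yields the \emph{in fact} claim.

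Assume $m \in B$ satisfies (cdm), and fix $s \in B$. I would apply (cdm) with the element $\mathsf{k}s \in B$, using the identity $\mathsf{k}s \cdot b \leq s$ for every $b \in B$. This produces $r \in A$ such that, for every $a \in A$, $m \cdot f(ra) \preceq \mathsf{k}s \cdot f(a) \leq s$. In particular, for any fixed $a_0 \in A$, the element $r' := ra_0$ is defined in $A$ (since $m \cdot f(ra_0)$ is defined) and satisfies $m \cdot f(r') \leq s$. Hence $m$ itself already witnesses (cd), settling both the ``if'' direction and the supplementary claim.

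For the converse, assume $n \in B$ satisfies (cd); let $t \in B$ track $f$, let $u \in B$ realize its order-preservation, and use the pairing combinators $\mathsf{p}, \mathsf{p}_0, \mathsf{p}_1 \in A$. I would define, once and for all,
\[
m \;:=\; \lambda^{\ast} y.\ n \cdot (u \cdot t \cdot f(\mathsf{p}_0) \cdot y) \cdot (u \cdot t \cdot f(\mathsf{p}_1) \cdot y) \;\in\; B,
\]
independently of $s$. Now, given any $s \in B$, apply (cd) to obtain $r_0 \in A$ with $n \cdot f(r_0) \leq s$, and set $r := \mathsf{p}r_0 \in A$. For each $a \in A$, the pairing identities give $\mathsf{p}_0(ra) \leq r_0$ and $\mathsf{p}_1(ra) \leq a$ in $A$; feeding these through the tracker and the order-preserver of $f$ yields $u \cdot t \cdot f(\mathsf{p}_0) \cdot f(ra) \leq f(r_0)$ and $u \cdot t \cdot f(\mathsf{p}_1) \cdot f(ra) \leq f(a)$ in $B$. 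Substituting these into the definition of $m$ and chaining with $n \cdot f(r_0) \cdot f(a) \preceq s \cdot f(a)$ via axiom~(0), combinatory completeness then gives $m \cdot f(ra) \preceq s \cdot f(a)$, as required.

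The delicate step is the only-if direction: the single value $f(ra) \in B$ must carry enough information to reconstruct both a ``code for $s$'' (namely $r_0$, produced by (cd)) and the argument $a$. The resolution is to pair $r_0$ and $a$ internally to $A$ via $\mathsf{p}$ and then decode inside $B$ using $t$ and $u$; this expresses the idea that, although $f$ need not preserve combinators strictly, it preserves them up to the slack measured by the tracker and order-preserver, and that slack is absorbed once and for all in the definition of $m$. Everything else is a routine $\preceq$-calculation, provided one is careful to note that the definedness on the right-hand side of (cdm) forces the corresponding subexpressions on the left to be defined.
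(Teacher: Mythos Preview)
Your proof is correct and follows essentially the same route as the paper: for (cdm) $\Rightarrow$ (cd) you instantiate with $\sf{k}s$ and evaluate at a fixed element (the paper uses $\sf{i}$), and for (cd) $\Rightarrow$ (cdm) you pair $r_0$ with $a$ inside $A$ via $\sf{p}$ and decode in $B$ using the tracker and order-preserver, exactly as the paper does. One notational slip: under the left-association convention, your expression $u \cdot t \cdot f(\sf{p}_0) \cdot y$ parses as $((ut)f(\sf{p}_0))y$, whereas you clearly intend $u\bigl(t\cdot f(\sf{p}_0)\cdot y\bigr)$; the paper writes the latter, and your own verbal explanation (``feeding these through the tracker and the order-preserver'') confirms that this is what you mean.
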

\begin{proof}
First of all, suppose that $m\in B$ satisfies \cref{eq:cdm}. If $s\in B$, then we know that $\sf{k}s$ is defined, so by \cref{eq:cdm}, there exists an $r\in A$ such that $m\cdot f(ra)\preceq \sf{k}s\cdot f(a) \leq s$ for all $a\in A$. In particular, we have $m\cdot f(r\sf{i})\leq s$, so $m$ satisfies \cref{eq:cd}.

Conversely, suppose that $n\in B$ satisfies \cref{eq:cd}. Let $t\in B$ we a tracker of $f$ and let $f$ preserve the order up to $u\in B$. We define
\[
m = \lambda^* x. n(u(t\cdot f(\sf{p}_0)\cdot x))(u(t\cdot f(\sf{p}_1)\cdot x)).
\]
Now let $s\in B$, and find an $r\in A$ such that $n\cdot f(r)\leq s$. Now we compute
\begin{align*}
m\cdot f(\sf{p}ra) &\preceq n(u(t\cdot f(\sf{p}_0)\cdot f(\sf{p}ra)))(u(t\cdot f(\sf{p}_1)\cdot f(\sf{p}ra)))\\
&\preceq n(u \cdot f(\sf{p}_0(\sf{p}ra)))(u \cdot f(\sf{p}_1(\sf{p}ra)))\\
&\preceq n\cdot f(r)\cdot f(a)\\
&\preceq s\cdot f(a),
\end{align*}
as desired.
\end{proof}


\section{Products and coproducts in $\mathsf{OPCA}$}\label{sec:prod_OPCA}

In this section, we investigate the existence of pseudo(co)products in $\sf{OPCA}$, and their interaction with c.d.\@ morphisms. We start with a result by J.\@ Longley (\cite{longleyphd}, Proposition 2.1.7).

\begin{prop}\label{prop:zero_obj}
The category $\sf{OPCA}$ has a pseudozero object.
\end{prop}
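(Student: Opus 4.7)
The plan is to exhibit the one-element OPCA $1 = \{*\}$, with the unique total application $** = *$ and the unique (trivial) order, as a pseudozero object. The element $*$ itself may be taken as both $\sf{k}$ and $\sf{s}$, and every axiom in \cref{defn:OPAS,defn:OPCA} is satisfied vacuously. The proof then splits into checking pseudoterminality and pseudoinitiality, i.e., that for every OPCA $A$ the hom-preorders $\sf{OPCA}(A,1)$ and $\sf{OPCA}(1,A)$ are both equivalent to the terminal preorder.

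For pseudoterminality there is only one function $A \to 1$; I would verify it is a morphism by taking its tracker and order-preservation witness to be $*$, all requirements of \cref{defn:morph} collapsing to trivialities. Hence $\sf{OPCA}(A,1)$ is a one-element preorder, which is a fortiori equivalent to the terminal preorder.

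For pseudoinitiality I would argue as follows. Any function $1 \to A$ is determined by its value $a \in A$; call this morphism $f_a$. It is a morphism of OPCAs since $\sf{k}\cdot a \cdot a \leq a$ shows $\sf{k}$ tracks it, and $\sf{i}\cdot a \leq a$ shows it preserves the order up to $\sf{i}$. To show all such morphisms are equivalent in $\sf{OPCA}(1,A)$, given $a,b \in A$ take $s = \sf{k}b \in A$; then $s \cdot f_a(*) = \sf{k}ba \leq b = f_b(*)$, so $f_a \leq f_b$ in the sense of \cref{defn:order_morph}. By symmetry $f_b \leq f_a$, so $f_a \simeq f_b$. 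Thus $\sf{OPCA}(1,A)$ is again equivalent to the terminal preorder.

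There is no real obstacle: the only thing one has to notice is that the preorder enrichment is what saves us on the initial side, since distinct elements $a,b \in A$ give genuinely distinct functions $1 \to A$, but they become equivalent as morphisms precisely because constant algorithms of the form $\sf{k}b$ are always available in any OPCA. This is also why an honest (strict) zero object cannot be expected to exist in general.
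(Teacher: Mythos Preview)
Your proof is correct and follows essentially the same approach as the paper: exhibit the one-element OPCA $\mathbf{1}=\{*\}$, observe that the unique function $A\to\mathbf{1}$ is a morphism, and show that all constant maps $\mathbf{1}\to A$ are isomorphic. The paper simply asserts these morphisms are ``clearly all isomorphic'', while you supply the explicit realizer $\sf{k}b$; otherwise the arguments coincide.
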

\begin{proof}
The required pseudozero object is the OPCA $\mathbf{1} = \{*\}$, where $** = *$. For every OPCA $A$, there is only one function $!\colon A\to \mathbf{1}$, and this is clearly a morphism of OPCAs, so $\mathbf{1}$ is in fact a 2-terminal object. Conversely, every element $c\in A$ yields a morphism of OPCAs $\un\colon 1\to A$ with $\un(*) = c$. Clearly, these are all isomorphic, so $\mathbf{1}$ is also a pseudoinitial object.
\end{proof}

The existence of a pseudozero object means that we also have \emph{zero morphisms}.

\begin{defn}\label{defn:zero_mor}
A morphism of OCPAs $A\to B$ is called a \emph{zero morphism} if it factors, up to isomorphism, through $\mathbf{1}$. 
\end{defn}
The following lemma provides two alternative characterizations of zero morphisms. We leave the proof to the reader.
\begin{lem}\label{lem:zero_mor}
For a morphism of OPCAs $f\colon A\to B$, the following are equivalent:
\begin{itemize}
\item[\textup{(}i\textup{)}]	$f$ is a zero morphism;
\item[\textup{(}ii\textup{)}]	$f(A) = \{f(a)\mid a\in A\}$ has a lower bound;
\item[\textup{(}iii\textup{)}]	$f$ is a top element of $\sf{OPCA}(A,B)$.
\end{itemize}
\end{lem}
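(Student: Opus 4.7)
My plan is to prove the three equivalences by cycling (i) $\Rightarrow$ (iii) $\Rightarrow$ (ii) $\Rightarrow$ (i). The one combinatorial observation that drives every step is that, for any $b\in B$, the element $\sf{k}\cdot b$ behaves as a constant function: $\sf{k}bx\leq b$ for every $x$, by axiom (1) in \cref{defn:OPCA}. I will use this repeatedly.

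For (i) $\Rightarrow$ (iii), suppose $f$ factors up to isomorphism through $\mathbf{1}$. Unwinding \cref{defn:zero_mor} and \cref{prop:zero_obj}, this means $f\simeq g_c$ for some constant morphism $g_c\colon A\to B$, $a\mapsto c$ (where $c\in B$). To show $f$ is a top of $\sf{OPCA}(A,B)$, I take an arbitrary morphism $g\colon A\to B$ and verify $g\leq g_c$: the element $\sf{k}\cdot c$ realizes this inequality, since $\sf{k}cx\leq c = g_c(a)$ for every $a\in A$. Composing with the isomorphism $g_c\simeq f$ then gives $g\leq f$.

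For (iii) $\Rightarrow$ (ii), assume $f$ is top. Fix any $c\in B$; the constant map $g_c$ is a morphism (it factors through $\mathbf{1}$), so by hypothesis $g_c\leq f$. The realizer $s\in B$ witnessing this satisfies $s\cdot c\leq f(a)$ for all $a\in A$, so $s\cdot c$ is a lower bound of $f(A)$.

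For (ii) $\Rightarrow$ (i), let $b$ be a lower bound of $f(A)$, so $b\leq f(a)$ for every $a$. The constant map $g_b\colon a\mapsto b$ factors through $\mathbf{1}$ (via $!\colon A\to\mathbf{1}$ and $\un\colon\mathbf{1}\to B$ with $\un(*)=b$), so it suffices to show $f\simeq g_b$. The inequality $g_b\leq f$ is realized by $\sf{i}$, since $\sf{i}\cdot b\leq b\leq f(a)$; and $f\leq g_b$ is realized by $\sf{k}\cdot b$, since $\sf{k}b\cdot f(a)\leq b = g_b(a)$. The constant-function trick is the only non-trivial move in the whole argument, and it makes the isomorphism $f\simeq g_b$ essentially immediate, so I do not expect any serious obstacle.
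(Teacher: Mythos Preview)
Your proof is correct. The paper leaves this lemma to the reader, so there is no argument to compare against; your cycle (i)$\Rightarrow$(iii)$\Rightarrow$(ii)$\Rightarrow$(i) using the constant-function trick $\sf{k}b\cdot x\leq b$ is exactly the elementary verification one would expect.
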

It follows from (iii) that $\sf{OPCA}$ is even enriched over preorders with a top element. Before we continue, we characterize the OPCA $\mathbf{1}$ up to equivalence in a number of ways.
\begin{lem}\label{lem:trivial}
Let $A$ be an OPCA. The following are equivalent:
\begin{itemize}
\item[\textup{(}i\textup{)}]	$A$ is equivalent to $\mathbf{1}$;
\item[\textup{(}ii\textup{)}]	$A$ has a least element;
\item[\textup{(}iii\textup{)}]	$\id_A$ is a zero morphism;
\item[\textup{(}iv\textup{)}]	$\un\colon \mathbf{1}\to A$ is c.d.
\end{itemize}
\end{lem}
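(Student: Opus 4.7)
The plan is to prove the cycle of implications (i) $\Rightarrow$ (ii) $\Rightarrow$ (iii) $\Rightarrow$ (iv) $\Rightarrow$ (i). The easiest step is (ii) $\Rightarrow$ (iii), which is immediate from \cref{lem:zero_mor}(ii): if $\bot$ is a least element of $A$, then $\bot$ is a lower bound of $\id_A(A) = A$, so $\id_A$ is a zero morphism.

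For (i) $\Rightarrow$ (ii), I would unpack an equivalence $A\simeq\mathbf{1}$ as morphisms $f\colon A\to\mathbf{1}$ and $g\colon\mathbf{1}\to A$ with $gf\simeq\id_A$ (the other composite is automatic since $\mathbf{1}$ is 2-terminal). Writing $c := g(*)$, the composite $gf$ is constant at $c$, so the inequality $gf\leq\id_A$ supplies an $s\in A$ with $s\cdot c\leq a$ for every $a\in A$, and $s\cdot c$ is then the required least element.

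For (iii) $\Rightarrow$ (iv), a factorization of $\id_A$ through $\mathbf{1}$ up to isomorphism has the form $\un\circ\mathord{!}\simeq\id_A$; exactly as in the previous step this supplies an $s\in A$ with $s\cdot\un(*)\leq a$ for all $a\in A$, and setting $n := s$ witnesses \cref{eq:cd} for $\un\colon\mathbf{1}\to A$ (given $s'\in A$, take $r = *$). For (iv) $\Rightarrow$ (i), let $n\in A$ witness computational density of $\un$; instantiating \cref{eq:cd} at $s = a$ (the only possible $r$ being $*$) gives $n\cdot\un(*)\leq a$ for every $a\in A$, so $n$ realizes $\un\circ\mathord{!}\leq\id_A$, while $\lambda^*x.\un(*)$ realizes $\id_A\leq\un\circ\mathord{!}$ and $\mathord{!}\circ\un = \id_\mathbf{1}$ holds automatically, so $A\simeq\mathbf{1}$.

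One subtlety worth flagging is that the symbol $\un$ depends on a choice of element of $A$, but the proof of \cref{prop:zero_obj} shows that any two such choices are isomorphic as morphisms $\mathbf{1}\to A$, and \cref{prop:cd}(iv) makes computational density invariant under isomorphism, so the clause ``$\un$ is c.d.''\ in (iv) is unambiguous. There is no substantial obstacle in this proof; the essential observation is simply that for $\un\colon\mathbf{1}\to A$, the condition \cref{eq:cd} collapses to the assertion that $\un(*)$ becomes a lower bound of $A$ after application by one fixed realizer, which is also exactly what it means for $\id_A$ to factor through $\mathbf{1}$ up to isomorphism.
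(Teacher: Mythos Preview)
The paper does not actually supply a proof of \cref{lem:trivial}; the lemma is stated and then immediately used. Your cycle (i) $\Rightarrow$ (ii) $\Rightarrow$ (iii) $\Rightarrow$ (iv) $\Rightarrow$ (i) is correct and is exactly the kind of routine verification the paper is implicitly leaving to the reader. The key observation---that for $\un\colon\mathbf{1}\to A$ the condition \cref{eq:cd} collapses to ``$n\cdot\un(*)$ is a lower bound of $A$''---is the only real content, and you have identified it cleanly. Your remark about the well-definedness of clause (iv) (independence of the choice of $\un$ via \cref{prop:cd}(iv)) is a nice touch that the paper glosses over.
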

An OPCA $A$ satisfying the equivalent conditions of \cref{lem:trivial} will be called \emph{trivial}.

If $A$ is an OPCA, then $!\circ\un$ is isomorphic to the identity $\id_\mathbf{1}$. On the other hand, $\un\s\circ \s !$ is, by definition, a zero morpism, so we also have $\id_A \leq \un\s\circ\s !$. This means that $!\dashv \un$.

In \cite{hofstrajaap} (Remark (2) on p.\@ 450), it is observed that $\sf{OPCA}$ has binary products. This construction generalizes to products of arbitrary (small) size, given choice on the index set.
\begin{prop}\label{prop:prod_OPCA}
The category $\sf{OPCA}$ has small pseudoproducts.
\end{prop}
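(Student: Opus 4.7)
The plan is to equip the cartesian product $A = \prod_{i\in I} A_i$ with pointwise order and pointwise partial application: $(a_i)_i \cdot (b_i)_i$ is defined precisely when every $a_i b_i \denotes$ in $A_i$, and in that case equals $(a_i b_i)_i$. Axiom (0) of \cref{defn:OPAS} then holds since each factor satisfies it. To exhibit combinators, I would invoke the Axiom of Choice on the index set $I$ to pick $\sf{k}_i, \sf{s}_i \in A_i$ for each $i$, and put $\sf{k} = (\sf{k}_i)_i$ and $\sf{s} = (\sf{s}_i)_i$. The three combinator axioms of \cref{defn:OPCA} then transfer coordinatewise from the individual $A_i$; note in particular that the hypothesis of $\sf{s}abc \preceq ac(bc)$, namely that $ac(bc)$ is defined in $A$, means each coordinate is defined in $A_i$, giving us what we need on each coordinate.

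Next, the projection $\pi_i \colon A \to A_i$ is defined as the set-theoretic projection. Because application and order in $A$ are both coordinatewise, $\pi_i$ commutes strictly with application whenever it is defined, and is order preserving; hence it is tracked by $\sf{i}_i \in A_i$ and preserves the order up to $\sf{i}_i$, so it is a morphism of OPCAs.

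For the universal property, given a cone $\{f_i\colon B \to A_i\}_{i\in I}$ of OPCA-morphisms, define $\langle f_i\rangle_{i} \colon B \to A$ by $b \mapsto (f_i(b))_i$. This is evidently the unique set-theoretic function satisfying $\pi_i \circ \langle f_i\rangle_i = f_i$ for every $i$, so composition with the projections commutes \emph{on the nose}. To verify that $\langle f_i \rangle_i$ is a morphism of OPCAs, use AC to choose, for each $i$, a tracker $t_i \in A_i$ and a witness $u_i \in A_i$ for order-preservation of $f_i$; then $T = (t_i)_i$ and $U = (u_i)_i$ lie in $A$ and, by coordinatewise application, track $\langle f_i\rangle_i$ and witness its order-preservation. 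The same coordinatewise argument shows that an inequality $g \leq g'$ of morphisms $B \to A$ can be realized by some element of $A$ if and only if each $\pi_i \circ g \leq \pi_i \circ g'$ can be realized in $A_i$: one direction is immediate by projecting a realizer, the other assembles realizers coordinatewise using AC. Hence the canonical comparison functor $\sf{OPCA}(B,A) \to \prod_{i\in I} \sf{OPCA}(B,A_i)$ is an isomorphism of preorders, so $A$ is in fact a 2-product, a fortiori a pseudoproduct.

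The only real obstacle is bookkeeping around partiality: one must consistently take application in $A$ to mean ``defined in every coordinate'' and apply AC uniformly to lift combinators, trackers, order-witnesses, and realizers of 2-cells from the factors to tuples. No deeper categorical difficulty arises.
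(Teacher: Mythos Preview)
Your proposal is correct and follows essentially the same route as the paper: equip $\prod_{i\in I}A_i$ with coordinatewise order and application, use AC to assemble combinators, trackers, order-witnesses, and realizers of 2-cells from the factors, and conclude that the comparison is an isomorphism of preorders, so the product is even a 2-product. The only detail the paper mentions that you leave implicit is that $A$ is nonempty (which follows anyway once you have chosen the $\sf{k}_i$ and formed $(\sf{k}_i)_i$).
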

\begin{proof}
Suppose we have an $I$-indexed sequence of OPCAs $(A_i)_{i\in I}$. We equip the product $A = \prod_{i\in I} A_i$ with an OPAS structure by defining the order and application coordinatewise. That is, if $a = (a_i)_{i\in I}$ and $b=(b_i)_{i\in I}$ are elements of $A$, then we set
\begin{itemize}
\item	$a\leq b$ iff $a_i\leq b_i$ for all $i\in I$;
\item	$ab\denotes$ iff $a_ib_i\denotes$ for all $i\in I$, and in this case, $ab = (a_ib_i)_{i\in I}$.
\end{itemize}
Observe that $A$ is nonempty by AC, and axiom (0) clearly holds for $A$, since it holds coordinatewise. For all $i\in I$, we may (using AC) pick suitable combinators $\sf{k}_i$ and $\sf{s}_i$ for $A_i$. Then it is not hard to check that $\sf{k} = (\sf{k}_i)_{i\in I}$ and $\sf{s} = (\sf{s}_i)_{i\in I}$ are suitable combinators for $A$, so $A$ is an OPCA. Moreover, for each $i\in I$, the projection $\pi_i\colon A\to A_i$ is easily seen to be a morphism of OPCAs.

Now suppose we have an OPCA $B$ and for all $i\in I$, a morphism $f_i\colon B\to A_i$. Then we have the obvious amalgamation $f = \langle f_i\rangle_{i\in I}\colon b\mapsto (f_i(b))_{i\in I}$. If, for each $i\in I$, we pick a tracker $t_i\in A_i$ of $f_i$, then $t = (t_i)_{i\in I}$ tracks $f$. Similarly, $f$ preserves the order up to $u = (u_i)_{i\in I}$, where each $f_i$ preserves the order up to $u_i\in A_i$. This shows that $f$ is a morphism of OPCAs, and we clearly have $\pi_if=f_i$ for all $i\in I$.

Finally, suppose we have $g,g'\colon B\to A$ such that $\pi_ig\leq \pi_ig'$ for all $i\in I$. If we pick, for each $i\in I$, a realizer $s_i\in A_i$ of $\pi_ig\leq \pi_ig'$, then $s = (s_i)_{i\in I}$ realizes $g\leq g'$. This concludes the proof, and we see that $\prod_{i\in I} A_i$ is even the 2-product of the $A_i$.
\end{proof}

The projections $\pi_i$ are clearly c.d., so if an amalgamation $f = \langle f_i\rangle_{i\in I}$ is c.d., then so are all the $f_i$. The converse only holds for \emph{finite} products.
\begin{prop}\label{prop:cd_prod_OPCA}
If $(A_i)_{i\in I}$ is a finite sequence of OPCAs, and the morphisms $f_i\colon B\to A_i$ are c.d., then $\langle f_i\rangle_{i\in I}\colon B\to\prod_{i\in I} A_i$ is also c.d.
\end{prop}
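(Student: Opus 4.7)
My plan is to use the formulation (cd) from \cref{defn:cd} directly. For each $i\in I$, fix a witness $n_i\in A_i$ of computational density of $f_i$, together with a tracker $t_i\in A_i$ of $f_i$ and a realizer $u_i\in A_i$ for order preservation. The finiteness of $I=\{1,\ldots,k\}$ lets me bundle finitely many elements of $B$ into a single one via iterated use of $\sf{p}$, yielding a $k$-tuple combinator $\langle-,\ldots,-\rangle$ in $B$ and projection combinators $\sf{q}_i\in B$ satisfying $\sf{q}_i\langle b_1,\ldots,b_k\rangle\leq b_i$ for all $b_j\in B$. This packaging step is exactly the point where finiteness is used; in the infinite case no single element of $B$ can code an arbitrary tuple.

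The key construction is to replace each $n_i$ by an element $n'_i\in A_i$ which, on input $f_i(r)$, first extracts the $i$-th coordinate (using $f_i(\sf{q}_i)$ together with the tracker $t_i$), then corrects for the loss incurred by order preservation (using $u_i$), and finally applies the original $n_i$. Concretely, using \cref{prop:combcomp} in $A_i$, I set
\[
n'_i \;=\; \lambda^\ast x.\, n_i\bigl(u_i\bigl(t_i\cdot f_i(\sf{q}_i)\cdot x\bigr)\bigr),
\]
and define $n=(n'_i)_{i\in I}\in\prod_{i\in I}A_i$. I claim $n$ witnesses computational density of $f=\langle f_i\rangle_{i\in I}$.

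Given $s=(s_i)_{i\in I}\in\prod_{i\in I}A_i$, pick for each $i$ some $r_i\in B$ with $n_i\cdot f_i(r_i)\leq s_i$ (possible because each $f_i$ is c.d.) and set $r=\langle r_1,\ldots,r_k\rangle\in B$. Then I need to verify that $n'_i\cdot f_i(r)\leq s_i$ for every $i$, so that $n\cdot f(r)\leq s$ coordinatewise. Unfolding the definition of $n'_i$ via \cref{prop:combcomp} gives $n'_i\cdot f_i(r)\preceq n_i(u_i(t_i\cdot f_i(\sf{q}_i)\cdot f_i(r)))$; the tracker property of $t_i$ yields $t_i\cdot f_i(\sf{q}_i)\cdot f_i(r)\preceq f_i(\sf{q}_i r)$; the inequality $\sf{q}_i r\leq r_i$ in $B$ combined with order preservation gives $u_i\cdot f_i(\sf{q}_i r)\leq f_i(r_i)$; finally axiom (0) propagates this through $n_i(-)$ to give $n_i(u_i\cdot f_i(\sf{q}_i r))\leq n_i\cdot f_i(r_i)\leq s_i$. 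Chaining these steps using the definedness convention for $\preceq$ (once $s_i$ is known to dominate the bottom of the chain, everything along the chain is defined) concludes the proof.

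The only real obstacle is bookkeeping: one must keep the direction of the order-preservation realizer straight (it sends $f_i$ of a smaller element, once combined with $u_i$, below $f_i$ of a larger one) and ensure that each application in the definition of $n'_i$ is eventually known to be defined so that axiom (0) can be applied to transport inequalities through $n_i$.
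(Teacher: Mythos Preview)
Your proof is correct and takes essentially the same approach as the paper: the paper reduces to the nullary and binary cases (using $\sf{p}$, $\sf{p}_0$, $\sf{p}_1$ in place of your $k$-tuple and $\sf{q}_i$), but the witness $n'_i=\lambda^\ast x.\,n_i(u_i(t_i\cdot f_i(\sf{q}_i)\cdot x))$ and the verification chain are identical to what the paper does. The only cosmetic difference is that you handle all finite $I$ uniformly rather than inducting from the binary case.
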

\begin{proof}
It suffices to treat the nullary and the binary case. The nullary case states that $!\colon B\to \mathbf{1}$ is always c.d., which follows from the adjunction $!\dashv\un$.

For the binary case, suppose we have c.d.\@ morphisms $f_0\colon B\to A_0$ and $f_1\colon B\to A_1$. Let $t_i\in A_i$ track $f_i$, let $f_i$ preserve the order up to $u_i\in A_i$, and let the computational density of $f_i$ be witnessed by $n_i\in A_i$. We define $n'_i = \lambda^* x.n_i(u_i(t_i\cdot f_i(\sf{p}_i)\cdot x))\in A_i$. We claim that $n=(n'_0,n'_1)\in A_0\times A_1$ witnesses the computational density of $f=\langle f_0,f_1\rangle\colon B\to A_0\times A_1$.

In order to prove this, let $s=(s_0,s_1)\in A_0\times A_1$. Then we know that there exist $r_i\in B$ such that $n_i\cdot f_i(r_i)\leq s_i$. Now define $r = \sf{p}r_0r_1\in B$. Then
\[
n'_i\cdot f_i(r) \preceq n_i(u_i(t_i\cdot f_i(\sf{p}_i)\cdot f_i(r))) \preceq n_i(u_i\cdot f_i(\sf{p}_ir)) \preceq n_i\cdot f(r_i) \leq s_i,
\]
so $n\cdot f(r)\leq s$, as desired.
\end{proof}

\begin{ex}\label{ex:cd_prod_OPCA}
Let $A$ be an OPCA that is not pseudotrivial. Then in particular, $\sf{k}$ and $\kbar$ do not have a common lower bound, for if $u$ were a lower bound of $\sf{k}$ and $\kbar$, then $uab$ would be a lower bound of $a$ and $b$, for arbitrary $a,b\in A$. Let $I$ be a set such that $2^{|I|}>|A|$. Then a morphism $f\colon A\to A^I$ is never c.d., where $A^I$ denotes the $I$-fold product of $A$. Indeed, suppose for the sake of contradiction that $f$ is c.d., witnessed by $n\in A^I$. Then every element of $A^I$ is bounded from below by an element of $X = \{n\cdot f(r)\mid r\in A, n\cdot f(r)\denotes\}$. This set $X$ has cardinality at most $|A|$. However, the subset $\{a\in A^I\mid \forall i\in I\s (a_i\in\{\sf{k},\kbar\})\}$ of $A^I$, which has cardinality $2^{|I|}>|A|\geq |X|$, has the property that every two distinct elements do not have a common lower bound in $A^I$: contradiction.

In particular, the diagonal $\delta\colon A\to A^I$ is not c.d., which means that \cref{prop:cd_prod_OPCA} does not hold for infinite $I$. \ruitje
\end{ex}

Just as the 2-terminal object $\mathbf{1}$ is also pseudoinitial, \emph{finite} 2-products in $\sf{OPCA}$ also serve as pseudocoproducts.
\begin{thm}
The category $\sf{OPCA}$ has finite pseudocoproducts.
\end{thm}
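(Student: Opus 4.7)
The plan is to show that the pseudoproduct $A_0\times A_1$ from \cref{prop:prod_OPCA}, equipped with appropriate injections, also serves as a pseudocoproduct; the nullary case is already handled by \cref{prop:zero_obj} (the pseudozero object is pseudoinitial), so I focus on the binary case. First I would pick base points $c_0\in A_0$ and $c_1\in A_1$ and set
\[
i_0\colon A_0\to A_0\times A_1,\ a_0\mapsto (a_0,c_1), \qquad i_1\colon A_1\to A_0\times A_1,\ a_1\mapsto (c_0,a_1),
\]
which are easily verified to be morphisms of OPCAs, and which satisfy $\pi_0 i_0=\id_{A_0}$, $\pi_1 i_1=\id_{A_1}$ while $\pi_1 i_0$ and $\pi_0 i_1$ are zero morphisms.

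For the existence half of the universal property, given morphisms $f_j\colon A_j\to B$ I would define the amalgamation
\[
[f_0,f_1]\colon A_0\times A_1\to B, \qquad (a_0,a_1)\mapsto \sf{p}\cdot f_0(a_0)\cdot f_1(a_1)
\]
using the pair combinator of $B$. Checking this is a morphism is routine: the tracker unpairs with $\sf{p}_0,\sf{p}_1$, applies the trackers of $f_0,f_1$ coordinatewise, and repairs with $\sf{p}$; order preservation is analogous. The triangle isomorphisms $[f_0,f_1]\circ i_j\simeq f_j$ follow from $\sf{p}_0\cdot\sf{p}\cdot f_0(a_0)\cdot f_1(c_1)\leq f_0(a_0)$ in one direction and the realizer $\lambda^\ast x.\sf{p}\cdot x\cdot f_1(c_1)\in B$ in the other (and symmetrically for $i_1$).

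The main obstacle is uniqueness, which I would prove in the stronger form that every morphism $h\colon A_0\times A_1\to B$ satisfies $h\simeq[h\circ i_0,h\circ i_1]$. For $[h\circ i_0,h\circ i_1]\leq h$, the key identity is
\[
(\sf{k}_0,\kbar_1)\cdot(a_0,c_1)\cdot(c_0,a_1)=(\sf{k}_0 a_0 c_0,\kbar_1 c_1 a_1)\leq(a_0,a_1)
\]
in $A_0\times A_1$, so two applications of the tracker of $h$ followed by its order-preservation realizer combine $h(a_0,c_1)$ and $h(c_0,a_1)$ into something below $h(a_0,a_1)$. For the opposite inequality, the elements $(\sf{i}_0,\sf{k}_1 c_1),(\sf{k}_0 c_0,\sf{i}_1)\in A_0\times A_1$ each collapse one coordinate of $(a_0,a_1)$ to the relevant base point (for instance $(\sf{i}_0,\sf{k}_1 c_1)\cdot(a_0,a_1)\leq(a_0,c_1)$), and the same tracker-plus-order-preservation recipe extracts elements below $h(a_0,c_1)$ and $h(c_0,a_1)$ from $h(a_0,a_1)$, which are then paired by $\sf{p}$ in $B$. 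Once $h\simeq[h\circ i_0,h\circ i_1]$ is established, both essential surjectivity and fullness of $h\mapsto(h\circ i_0,h\circ i_1)$ follow at once, using that $[-,-]$ is manifestly monotone in each argument.
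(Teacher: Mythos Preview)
Your proposal is correct and follows essentially the same approach as the paper: the paper also uses $A_0\times A_1$ with coordinatewise injections (choosing the base point $\sf{i}$ rather than arbitrary $c_j$), the same pairing amalgamation $[f_0,f_1](a_0,a_1)=\sf{p}\cdot f_0(a_0)\cdot f_1(a_1)$, and the same elements $(\sf{k},\kbar)$ and $(\sf{i},\sf{ki}),(\sf{ki},\sf{i})$ (your $(\sf{i}_0,\sf{k}_1c_1)$ etc.\ specialised to $c_j=\sf{i}$) for the uniqueness calculation. The only presentational difference is that the paper proves the 2-cell reflection property $g\kappa_i\leq g'\kappa_i\Rightarrow g\leq g'$ directly, whereas you package it as $h\simeq[h\circ i_0,h\circ i_1]$ and then invoke monotonicity of $[-,-]$; the underlying computation is the same.
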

\begin{proof}
It suffices to treat the binary case. Let $A_0$ and $A_1$ be OPCAs. Then there is a morphism of OPCAs $\kappa_0\colon A_0\to A_0\times A_1$ given by $\kappa_A(a) = (a,\sf{i})$. Similarly, we have $\kappa_1\colon A_1\to A_0\times A_1$ given by $\kappa_1(a) = (\sf{i},a)$. We claim that this is a pseudocoproduct diagram.

First of all, suppose that we have morphisms of OPCAs $f_0\colon A_0\to B$ and $f_1\colon A_1\to B$. Let $t_i\in B$ track $f_i$, and let $f_i$ preserve the order up to $u_i\in B$. We define $f=[f_0,f_1]\colon A_0\times A_1\to B$ by $f(a_0,a_1) = \sf{p}\cdot f_0(a_0)\cdot f_1(a_1)$. Then $f$ is tracked by
\[
\lambda^* xy. \sf{p}(t_0(\sf{p}_0x)(\sf{p}_0y))(t_1(\sf{p}_1x)(\sf{p}_1y))\in B,
\]
as a straightforward calculation will show. Similarly, one can show that $f$ preserves the order up to $\lambda^* x.\sf{p}(u_0(\sf{p}_0x))(u_1(\sf{p}_1x))\in B$, so $f$ is a morphism of OPCAs. We have $f(\kappa_0(a)) = \sf{p}a\sf{i}$, so $\sf{p}_0\in B$ realizes $f\kappa_0\leq f_0$ and $\lambda^* x. \sf{p}x\sf{i}$ realizes $f_0\leq f\kappa_0$. Similarly, one shows that $f\kappa_1\simeq f_1$.

Now suppose we have morphisms $g,g'\colon A_0\times A_1\to B$ such that $g\kappa_0\leq g'\kappa_0$ and $g\kappa_1\leq g'\kappa_1$. Let $s_i\in B$ realize $g\kappa_i\leq g'\kappa_i$, let $t,t'\in B$ track $g$ resp.\@ $g'$, and suppose that $g$ and $g'$ preserve the order up to $u,u'\in B$ respectively. We claim that $g\leq g'$ is realized by:
\[
s = \lambda^* x.u'(t'(t'\cdot g'(\sf{k},\kbar)\cdot(s_0(u(t\cdot g(\sf{i},\sf{ki})\cdot x))))(s_1(u(t\cdot g(\sf{ki},\sf{i})\cdot x)))) \in B.
\]
Let $(a_0,a_1)\in A_0\times A_1$. Then we have:
\begin{align*}
s_0(u(t\cdot g(\sf{i},\sf{ki})\cdot g(a_0,a_1))) &\preceq s_0(u\cdot g(\sf{i}a_0,\sf{ki}a_1))\\
&\preceq s_0\cdot g(a_0,\sf{i})\\
&\simeq s_0\cdot g(\kappa_0(a_0))\\
&\leq g'(\kappa_0(a_0))\\
&= g'(a_0,\sf{i}),
\end{align*}
and similarly, $s_1(u(t\cdot g(\sf{ki},\sf{i})\cdot g(a_0,a_1)))\leq g'(\sf{i},a_1)$. This yields:
\begin{align*}
s\cdot g(a_0,a_1) &\preceq u'(t'(t'\cdot g'(\sf{k},\kbar)\cdot g'(a_0,\sf{i}))\cdot g'(\sf{i},a_1))\\
&\preceq u'(t'\cdot g'(\sf{k}a_0,\kbar\sf{i})\cdot g'(\sf{i},a_1))\\
&\preceq u'\cdot g(\sf{k}a_0\sf{i}, \kbar\sf{i}a_1)\\
&\leq g'(a_0,a_1), 
\end{align*}
as desired.
\end{proof}

\begin{cor}
The category $\sf{OPCA}$ has finite pseudobiproducts
\end{cor}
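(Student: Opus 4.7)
The plan is to observe that the object $A_0 \times A_1$ constructed in \cref{prop:prod_OPCA} already carries both the universal pseudoproduct structure and the universal pseudocoproduct structure established in the preceding theorem, and then to verify that the resulting spans and cospans satisfy the biproduct compatibility conditions. For the empty (nullary) case, the pseudozero object $\mathbf{1}$ of \cref{prop:zero_obj} serves trivially as the empty pseudobiproduct.

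For the binary case, I would take the product projections $\pi_i \colon A_0 \times A_1 \to A_i$ from \cref{prop:prod_OPCA} together with the coproduct injections $\kappa_0(a) = (a, \sf{i})$ and $\kappa_1(a) = (\sf{i}, a)$ from the preceding theorem. The diagonal compatibilities $\pi_i \kappa_i \simeq \id_{A_i}$ hold on the nose, since $\pi_0\kappa_0(a) = a$ and $\pi_1\kappa_1(a) = a$. The off-diagonal compatibilities amount to checking that $\pi_1 \kappa_0$ and $\pi_0 \kappa_1$ are zero morphisms, which is immediate: each is the constant function with value $\sf{i}$, so its image $\{\sf{i}\}$ has $\sf{i}$ itself as a lower bound, and \cref{lem:zero_mor}(ii) identifies these as zero morphisms.

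Thus no real obstacle arises: the shared underlying object $A_0 \times A_1$ is simultaneously the pseudoproduct and the pseudocoproduct, and the strict validity of the biproduct identities makes it a finite pseudobiproduct. The only point worth noting is that, since $\sf{OPCA}$ is preorder-enriched (rather than Ab-enriched), we do not attempt to recover the identity of $A_0 \times A_1$ from $\kappa_0 \pi_0$ and $\kappa_1 \pi_1$ via a sum; the pseudobiproduct is characterized purely by the conjunction of the two universal properties together with the zero-morphism compatibility.
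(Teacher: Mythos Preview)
Your proposal is correct and follows essentially the same approach as the paper: having already shown that $A_0\times A_1$ is both the pseudoproduct and the pseudocoproduct, you verify the compatibility conditions $\pi_i\kappa_i\simeq\id_{A_i}$ and that $\pi_j\kappa_i$ is a zero morphism for $i\neq j$, exactly as the paper does. Your additional remarks on the nullary case and on the preorder-enriched (as opposed to $\mathsf{Ab}$-enriched) setting are accurate and do not affect the argument.
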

\begin{proof}
The only thing left to check is that $A_0\stackrel{\kappa_0}{\longrightarrow} A_0\times A_1\stackrel{\pi_0}{\longrightarrow} A_0$ is isomorphic to $\id_{A_0}$, and that $A_0\stackrel{\kappa_0}{\longrightarrow} A_0\times A_1\stackrel{\pi_1}{\longrightarrow} A_1$ is a zero morphism. Both are immediate.
\end{proof}

Moreover, \cref{prop:cd}(ii) immediately yields the following relation between coproducts and computational density.

\begin{cor}
If $f_0\colon A_0\to B$ and $f_1\colon A_1\to B$ are morphisms of OPCAs and $f_0$ is c.d., then $[f_0,f_1]\colon A_0\times A_1\to B$ is also c.d.
\end{cor}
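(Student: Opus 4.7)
The plan is to read off the conclusion directly from the universal property of the pseudocoproduct established just above, together with \cref{prop:cd}(ii). Recall that the coproduct inclusion $\kappa_0\colon A_0\to A_0\times A_1$ satisfies $[f_0,f_1]\circ \kappa_0\simeq f_0$ by construction.

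First I would observe that $[f_0,f_1]\circ \kappa_0$ is computationally dense. Indeed, $f_0$ is c.d.\@ by hypothesis, and $[f_0,f_1]\circ \kappa_0\simeq f_0$, so by \cref{prop:cd}(iv) (downward closure of computational density along the preorder enrichment, applied in both directions) the composite $[f_0,f_1]\circ \kappa_0$ is also c.d.

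Then I would apply \cref{prop:cd}(ii) with $g=[f_0,f_1]$ and $f=\kappa_0$: since $g\circ f=[f_0,f_1]\circ \kappa_0$ is c.d., it follows that $g=[f_0,f_1]$ itself is c.d., which is exactly the claim.

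There is essentially no obstacle; the only minor point worth articulating in the proof is the invocation of \cref{prop:cd}(iv) to transport computational density across the isomorphism $[f_0,f_1]\circ \kappa_0\simeq f_0$, since \cref{prop:cd}(iv) is stated only for the inequality $f'\leq f$. But of course $f\simeq f'$ is shorthand for $f\leq f'$ and $f'\leq f$, so a single application in the appropriate direction suffices.
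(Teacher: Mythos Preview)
Your argument is correct and is exactly the approach the paper takes: the paper simply states that the corollary follows immediately from \cref{prop:cd}(ii), leaving implicit the step (which you spell out) of transporting computational density across the isomorphism $[f_0,f_1]\circ\kappa_0\simeq f_0$ via \cref{prop:cd}(iv).
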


In analogy with ordinary coproducts, we say that finite pseudocoproducts are \emph{disjoint} if, for every pseudocoproduct diagram $A_0\to A_0\sqcup A_1\leftarrow A_1$, the coprojections are pseudomonos, and
\[\begin{tikzcd}
0 \arrow[d] \arrow[r] \arrow[rd] & A_1 \arrow[d] \\
A_0 \arrow[r]                    & A_0\sqcup A_1
\end{tikzcd}\]
is a pseudopullback, where 0 denotes the pseudoinitial object.

\begin{prop}\label{prop:OPCA_disjoint}
The finite pseudocoproducts in $\sf{OPCA}$ are disjoint.
\end{prop}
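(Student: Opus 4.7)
The plan is to verify the two required properties separately: that each coprojection $\kappa_i$ is a pseudomono, and that the square with the pseudoinitial $\mathbf{1}$ on top is a pseudopullback. By symmetry, it suffices to treat $\kappa_0\colon A_0 \to A_0\times A_1$ throughout.

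For the pseudomono property, suppose $g,g'\colon B\to A_0$ satisfy $\kappa_0 g \leq \kappa_0 g'$, realized by some $s=(s_0,s_1)\in A_0\times A_1$. Evaluating coordinatewise at any $b\in B$ gives $(s_0\cdot g(b), s_1\cdot \sf{i}) \leq (g'(b),\sf{i})$; in particular $s_0\cdot g(b)\leq g'(b)$, so $s_0$ realizes $g\leq g'$ in $\sf{OPCA}$. (This is essentially just that the projection $\pi_0$ is order-reflecting on realizers, which works because the second coordinate of $\kappa_0 g$ is the constant $\sf{i}$.)

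For the pullback property, suppose $C$ is an OPCA with morphisms $g_0\colon C\to A_0$ and $g_1\colon C\to A_1$ and an isomorphism $\kappa_0 g_0 \simeq \kappa_1 g_1$. I would unpack this as the existence of realizers $(s_0,s_1), (s'_0,s'_1)\in A_0\times A_1$ witnessing the two inequalities. Since $\kappa_0 g_0(c) = (g_0(c),\sf{i})$ and $\kappa_1 g_1(c) = (\sf{i}, g_1(c))$, the coordinatewise conditions yield (in particular) that $s'_0\cdot \sf{i}$ is a lower bound of $g_0(C)$ in $A_0$ and, symmetrically, that $s_1\cdot\sf{i}$ is a lower bound of $g_1(C)$ in $A_1$. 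By \cref{lem:zero_mor}, both $g_0$ and $g_1$ are zero morphisms, so they factor through $\mathbf{1}$ up to isomorphism as $g_0 \simeq \un_0 \circ\, !$ and $g_1 \simeq \un_1 \circ\, !$, where $!\colon C\to\mathbf{1}$ is the unique morphism.

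It remains to check the essential uniqueness of this factorization: any $h\colon C\to \mathbf{1}$ is isomorphic to $!$ since $\mathbf{1}$ is a 2-terminal object (as noted in the proof of \cref{prop:zero_obj}), which in fact makes the factorization strictly unique. No serious obstacle is expected; the main work is just the clean bookkeeping of the coordinates of the realizers to extract the needed lower bounds.
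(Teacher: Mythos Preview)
Your proposal is correct and follows essentially the same route as the paper. For the pseudomono property the paper invokes the retraction $\pi_i\kappa_i\simeq\id_{A_i}$ in one line, whereas you unwind this explicitly at the level of realizers; for the pseudopullback both arguments reduce to reading off the second coordinate of a realizer of $\kappa_0 g_0\leq \kappa_1 g_1$ (and symmetrically) to obtain lower bounds of $g_1(C)$ and $g_0(C)$, then appealing to \cref{lem:zero_mor}.
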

\begin{proof}
Since $\pi_i\kappa_i\simeq \id_{A_i}$, it is immediate that the $\kappa_i$ are pseudomonos. In order to establish the required pseudopullback, we need to show the following: if we have morphisms $f_0\colon B\to A_0$ and $f_1\colon B\to A_1$ such that $\kappa_0f_0\simeq \kappa_1f_1$, then $f_0$ and $f_1$ are both zero morphisms. Let $s = (s_0,s_1) \in A_0\times A_1$ realize $\kappa_0f_0 \leq \kappa_1f_1$. Then for all $b\in B$, we have $(s_0\cdot f_0(b),s_1\sf{i}) \simeq s\cdot \kappa_0(f_0(b)) \leq \kappa_1(f_1(b)) = (\sf{i},f_1(b))$. In particular, we have $s_1\sf{i}\leq f_1(b)$ for all $b\in B$, so $f_1$ is a zero morphism. The proof that $f_0$ is a zero morphism proceeds analogously.
\end{proof}
The `dual' result to \cref{prop:OPCA_disjoint} also holds; this will be useful in \cref{sec:prod_PCA}.
\begin{prop}\label{prop:dual_disjoint}
If $A_0$ and $A_1$ are OPCAs, then $\pi_i\colon A_0\times A_1\to A_i$ is a pseudoepi and
\[\begin{tikzcd}
A_0\times A_1 \arrow[d] \arrow[r] \arrow[rd] & A_1 \arrow[d] \\
A_0 \arrow[r]                                & \mathbf{1}   
\end{tikzcd}\]
is a pseudopushout diagram.
\end{prop}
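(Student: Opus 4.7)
My plan is to split the proof into the pseudoepi claim and the pseudopushout claim, both of which reduce to the biproduct structure just established and to the characterization of zero morphisms in \cref{lem:zero_mor}.

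The pseudoepi assertion is immediate. Since $\pi_i\kappa_i\simeq\id_{A_i}$ from the preceding corollary, the projection $\pi_i$ is split; hence for any $g,g'\colon A_i\to C$ with $g\pi_i\leq g'\pi_i$, postcomposition with $\kappa_i$ yields $g\simeq g\pi_i\kappa_i\leq g'\pi_i\kappa_i\simeq g'$. This is exactly the condition that precomposition with $\pi_i$ reflects the order, so $\pi_i$ is a pseudoepi.

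For the pushout property, suppose given morphisms $h_0\colon A_0\to C$ and $h_1\colon A_1\to C$ with $h_0\pi_0\simeq h_1\pi_1$. I need a morphism $h\colon\mathbf{1}\to C$, unique up to isomorphism, satisfying $h\circ{!}\simeq h_0$ and $h\circ{!}\simeq h_1$. The substantive step is to deduce from the compatibility that $h_0$ and $h_1$ are zero morphisms. If $s\in C$ realizes $h_1\pi_1\leq h_0\pi_0$, then $s\cdot h_1(a_1)\leq h_0(a_0)$ for every $(a_0,a_1)\in A_0\times A_1$; fixing any $a_1^*\in A_1$ exhibits $s\cdot h_1(a_1^*)$ as a lower bound of $h_0(A_0)$, so $h_0$ is a zero morphism by \cref{lem:zero_mor}(ii). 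A symmetric argument, using a realizer in the opposite direction and fixing an element of $A_0$, handles $h_1$.

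With this in place, the remainder is formal. For existence, pick any $c\in C$ and define $h\colon\mathbf{1}\to C$ by $h(*)=c$; then $h\circ{!}$ factors through $\mathbf{1}$ by construction, hence is a zero morphism, and so isomorphic to $h_i$ by the top-element characterization of \cref{lem:zero_mor}(iii). For uniqueness, the morphism ${!}\colon A_0\to\mathbf{1}$ is itself a split pseudoepi (split by any $\un$), so $h\circ{!}\simeq h'\circ{!}$ forces $h\simeq h'$ by the same argument used in the first part. I expect the zero-morphism observation to be the only real obstacle; it mirrors, dually, the key step in \cref{prop:OPCA_disjoint}, where the pullback compatibility forced the maps \emph{into} the factors to be zero, whereas here the cocone compatibility forces the maps \emph{out} of the factors to be zero.
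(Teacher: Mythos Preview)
Your proof is correct and follows essentially the same approach as the paper: both derive the pseudoepi claim from the splitting $\pi_i\kappa_i\simeq\id_{A_i}$, and both reduce the pseudopushout to showing that $h_0\pi_0\simeq h_1\pi_1$ forces $h_0$ and $h_1$ to be zero morphisms via a realizer argument and \cref{lem:zero_mor}. The only difference is that you spell out the existence and uniqueness of the mediating arrow $\mathbf{1}\to C$, which the paper leaves implicit (since once the $h_i$ are zero morphisms, the factorization through $\mathbf{1}$ is automatic).
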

\begin{proof}
Since $\pi_i\kappa_i\simeq \id_{A_i}$, we know that $\pi_i$ is indeed pseudoepi.

For the pseudopushout, we need to show the following: if $f_0\colon A_0\to B$ and $f_1\colon A_1\to B$ are morphisms such that $f_0\pi_0\simeq f_1\pi_1$, then $f_0$ and $g_0$ are both zero morphisms. If $s\in B$ realizes $f_0\pi_0\leq f_1\pi_1$, then we have $s\cdot f_0(a_0)\leq f_1(a_1)$ for all $a_0\in A_0$ and $a_1\in A_1$. In particular, we have $s\cdot f_0(\sf{i})\leq f_1(a_1)$ for all $a_1\in A_1$, so $f_1$ is a zero morphism. The proof that $f_0$ is a zero morphism again proceeds analogously.
\end{proof}

We close this section by investigating coproducts in a category related to $\sf{OPCA}$.

\begin{defn}\label{defn:OPCAadj}
The preorder-enriched category $\sf{OPCA}_\adj$ is defined as follows.
\begin{itemize}
\item	Its objects are OPCAs.
\item	An arrow $f\colon A\to B$ is a pair of morphisms $f^*\colon B\to A$ and $f_*\colon A\to B$ with $f^*\dashv f_*$.
\item	If $f,g\colon A\to B$, then we say that $f\leq g$ if $f^*\leq g^*$; equivalently, if $g_*\leq f_*$. \ruitje
\end{itemize}
\end{defn}

\begin{prop}\label{prop:coprod_OPCA_adj}
The category $\sf{OPCA}_\adj$ has finite pseudocoproducts. Moreover, the pseudoinitial object is strict, and pseudocoproducts are disjoint.
\end{prop}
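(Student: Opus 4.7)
The plan is to take $\mathbf{1}$ as pseudoinitial object and the 2-product $A_0 \times A_1$ of \cref{prop:prod_OPCA} as the underlying OPCA of the binary pseudocoproduct in $\sf{OPCA}_\adj$. Conceptually, the assignment $f \mapsto f^*$ yields a 2-functor $\sf{OPCA}_\adj \to \sf{OPCA}^{op}$ which preserves the hom-order (by \cref{defn:OPCAadj}), so a pseudocoproduct in $\sf{OPCA}_\adj$ should correspond to a 2-product in $\sf{OPCA}$, provided the relevant ${}^*$-parts admit right adjoints.

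For pseudoinitiality, for any $A$ and any $c \in A$ the adjunction ${!} \dashv \un_c$ noted after \cref{lem:trivial} yields an arrow $\mathbf{1} \to A$ in $\sf{OPCA}_\adj$, and all such arrows share the same ${}^*$-part ${!}$, so they are equivalent in the hom-preorder. For strictness, any arrow $A \to \mathbf{1}$ in $\sf{OPCA}_\adj$ must be of the form $(\un_c, {!})$ with $\un_c \dashv {!}$; the counit $\un_c \leq \id_A$ forces $A$ to have a least element (up to realizability) and hence to be trivial by \cref{lem:trivial}, in which case $A \simeq \mathbf{1}$ and the arrow is automatically an equivalence.

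For binary pseudocoproducts, I define $\kappa_i^\adj \colon A_i \to A_0 \times A_1$ by $(\kappa_i^\adj)^* = \pi_i$ and $(\kappa_i^\adj)_*(a_i) = (a_i, c)$ for a fixed $c$ in the other OPCA (for $i=0$; symmetrically for $i=1$). The unit of $\pi_i \dashv (\kappa_i^\adj)_*$ is witnessed because the constant map in the ``wasted'' coordinate is a zero morphism, hence top by \cref{lem:zero_mor}. Given a cocone $(f_0, f_1)$ in $\sf{OPCA}_\adj$, I propose the mediating arrow $f$ with $f^* = \langle f_0^*, f_1^* \rangle$ (the amalgamation from \cref{prop:prod_OPCA}) and $f_*(a_0, a_1) = \sf{p} \cdot f_{0,*}(a_0) \cdot f_{1,*}(a_1)$. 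The cocone identity $(f\kappa_i^\adj)^* = \pi_i \langle f_0^*, f_1^* \rangle = f_i^*$ is immediate, and the 2-dimensional universal property reduces, via \cref{defn:OPCAadj}, to the 2-product universal property of $A_0 \times A_1$ in $\sf{OPCA}$. The main technical step is verifying $\langle f_0^*, f_1^* \rangle \dashv f_*$: the unit realizer can be built as $\lambda^* x.\, \sf{p}(s_0 x)(s_1 x)$ from unit realizers $s_i$ of $f_i^* \dashv f_{i,*}$ together with monotonicity of $\sf{p}$, while the counit realizer uses $\sf{p}_0, \sf{p}_1$ in combination with the trackers and order-preservers of the $f_i^*$ to isolate each coordinate and then applies the counit realizers of $f_i^* \dashv f_{i,*}$.

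For disjointness, the coprojections $\kappa_i^\adj$ are pseudomono in $\sf{OPCA}_\adj$ because their ${}^*$-parts $\pi_i$ are pseudoepi in $\sf{OPCA}$ by \cref{prop:dual_disjoint}, and the hom-order on $\sf{OPCA}_\adj$ is expressed through the ${}^*$-parts. For the pseudopullback square, a cocone $(q_0, q_1)$ in $\sf{OPCA}_\adj$ with $\kappa_0^\adj q_0 \simeq \kappa_1^\adj q_1$ yields $q_0^* \pi_0 \simeq q_1^* \pi_1$ in $\sf{OPCA}$, which by the pseudopushout property of \cref{prop:dual_disjoint} forces both $q_i^*$ to be zero morphisms; such a zero morphism admits a right adjoint in $\sf{OPCA}$ only when $Q$ is trivial, so either no cocone exists (and the universal property holds vacuously) or $Q$ is trivial, and in the latter case the unique mediator $Q \to \mathbf{1}$ is supplied by strict pseudoinitiality. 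The main obstacle throughout is the combinator-level verification of $\langle f_0^*, f_1^* \rangle \dashv f_*$, which requires careful assembly of the component realizers through the pair combinators.
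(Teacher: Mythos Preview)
Your proposal is correct and follows essentially the same plan as the paper: take $\mathbf{1}$ as the (strict) pseudoinitial object, realize the binary pseudocoproduct on the underlying 2-product $A_0\times A_1$ with coprojections $\pi_i\dashv\kappa_i$, and build the mediating arrow as $\langle f_0^*,f_1^*\rangle\dashv [f_{0,*},f_{1,*}]$. The paper verifies the counit $h^*h_*\leq\id$ purely from the product/coproduct universal properties rather than by explicit combinators, which is slightly cleaner than your proposed hand-assembly via $\sf{p}_0,\sf{p}_1$, but the content is the same.

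The one genuine variation is in the disjointness step: the paper works on the right-adjoint side, using \cref{prop:OPCA_disjoint} to conclude that $f_*,g_*$ are zero morphisms and then deducing $\id_C$ is zero from the counit $f^*f_*\leq\id_C$; you instead work on the left-adjoint side, invoking \cref{prop:dual_disjoint} to make $q_0^*,q_1^*$ zero and then arguing that a zero left adjoint forces the codomain trivial (via the counit $q_i^*q_{i,*}\leq\id_Q$ together with zero morphisms being top). Both arguments are valid duals of each other; your route has the small advantage that the pseudomono claim for $\kappa_i^{\mathrm{adj}}$ follows directly from the pseudoepi statement already packaged in \cref{prop:dual_disjoint}.
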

\begin{proof}
We have already seen that there are essentially unique morphisms $!\colon A\to \mathbf{1}$ and $\un\colon \mathbf{1}\to A$ satisfying $!\dashv\un$, yielding the (essentially) unique arrow $\mathbf{1}\to A$ in $\sf{OPCA}_\adj$. Moreover, if we have an arrow $A\to \mathbf{1}$ in $\sf{OPCA}_\adj$, then also $\un\!\dashv\ !$, so $!$ and $\un$ form an equivalence between $A$ and $\mathbf{1}$, meaning that $\mathbf{1}$ is indeed strict.

Now consider two OPCAs $A$ and $B$. We have the product diagram $A\stackrel{\pi_A}{\longleftarrow} A\times B\stackrel{\pi_B}{\longrightarrow} B$ and the coproduct diagram $A\stackrel{\kappa_A}{\longrightarrow} A\times B\stackrel{\kappa_B}{\longleftarrow} B$. We have already remarked that $\pi_A\kappa_A\simeq\id_A$. Moreover, it is easily computed that $\kappa_A\pi_A\geq\id_{A\times B}$, which means that $\pi_A\dashv\kappa_A$ is an arrow $A\to A\times B$ of $\sf{OPCA}_\adj$. Similarly, we have the arrow $\pi_B\dashv\kappa_B\colon B\to A\times B$. In order to show that this yields a pseudocoproduct diagram in $\sf{OPCA}_\adj$, we need to show the following: if $f\colon A\to C$ and $g\colon B\to C$ are arrows of $\sf{OPCA}_\adj$, then $h^* = \langle f^*, g^*\rangle$ is left adjoint to $h_* = [f_*,g_*]$. First of all, we may easily compute that $h_*(h^*(c)) = \sf{p}\cdot f_*(f^*(c))\cdot g_*(g^*(c))$. So, if $r,s\in C$ realize $\id_C\leq f_* f^*$ and $\id_C\leq g_* g^*$ respectively, then $\lambda^* x. \sf{p}(rx)(sx)$ realizes $\id_C\leq h_* h^*$. The other inequality can be obtained completely from universal properties. We have:
\[
\pi_A h^* h_* \kappa_A \simeq f^* f_* \leq \id_A \simeq \pi_A\kappa_A\quad\mbox{and}\quad \pi_A h^* h_* \kappa_B \simeq f^* g_* \leq \pi_A\kappa_B,
\]
so from the universal property of the coproduct $A\times B$, it follows that $\pi_A h^* h_* \leq\pi_A$. Similarly, we obtain $\pi_B h^* h_*\leq \pi_B$, and the universal property of the product $A\times B$ yields $h^* h_*\leq \id_{A\times B}$, as desired.

For disjointness, we first note that $\pi_A\dashv\kappa_A$ is a pseudomono because $\pi_A\kappa_A\simeq \id_A$. Now suppose we have arrows $f\colon C\to A$ and $g\colon C\to B$ of $\sf{OPCA}_\adj$ such that $\kappa_A f_*\simeq \kappa_B g_*$. Then we know from \cref{prop:OPCA_disjoint} that $f_*$ and $g_*$ are both zero morphisms. From $\id_C\geq f^* f_*$, it follows that $\id_C$ is also a zero morphism, i.e., $C$ is trivial. Now it is immediate that $\mathbf{1}$ is the pseudopullback of $A\to A\times B\leftarrow B$ in $\sf{OPCA}_\adj$.
\end{proof}

In particular, we must have that the codiagonal $\varepsilon\colon A\times A\to A$ is right adjoint to the diagonal $\delta\colon A\to A\times A$. This means that we can view $\varepsilon$ as an `internal binary meet map' on $A$ (compare with the internal finite meets of BCOs in \cite{allrealisrel}, p.\@ 246). Explicitly, this map is given by $\varepsilon(a,a') = \sf{p}aa'$. We can also deduce from this that $\sf{OPCA}$ is even enriched over posets with finite meets, rather than posets with a top element.


\section{Applicative morphisms}\label{sec:PCA}

In this section, we introduce the category of ordered PCAs and \emph{applicative} morphisms between them. Applicative morphisms (between unordered PCAs) were the morphisms originally considered by J.\@ Longley in \cite{longleyphd}. Applicative morphisms are no longer functions between the underlying sets, but total relations. In \cite{hofstrajaap}, it is shown how to reconstruct the notion of applicative morphism by introducting a certain pseudomonad on $\sf{OPCA}$. This is also the treatment we follow here.

\begin{defn}
Let $A$ be an OPCA.
\begin{itemize}
\item[(i)]	We define a new OPCA $TA$ as follows:
\begin{itemize}
\item	$TA$ is the set of all nonempty downsets of $A$, i.e.,
\[
TA = \{\emptyset\neq\alpha\subseteq A\mid \mbox{if }a\in\alpha\mbox{ and }a'\leq a,\mbox{ then }a'\in\alpha\}.
\]
\item	$TA$ is ordered by inclusion.
\item	For $\alpha,\beta\in TA$, we say that $\alpha\beta\denotes$ iff $ab\denotes$ for all $a\in\alpha$ and $b\in\beta$; and in this case,
\[
\alpha\beta = \downset\{ab\mid a\in\alpha, b\in\beta\}.
\]
\end{itemize}
\item[(ii)]	For a morphism of OPCAs $f\colon A\to B$, we define $Tf\colon TA\to TB$ by $Tf(\alpha) = \downset f(\alpha) = \downset\{f(a)\mid a\in \alpha\}$.
\item[(iii)]	We define $\delta_A\colon A\to TA$ and $\bigcup_A\colon TTA\to TA$ by $\delta_A(a) = \downset\{a\}$ and $\bigcup_A(\mathcal{A}) = \bigcup\mathcal{A}$. \ruitje
\end{itemize}
\end{defn}
Observe that for the combinators in $TA$, we may simply take $\downset\{\sf{k}\}$ and $\downset\{\sf{s}\}$.
\begin{prop}
The triple $(T,\delta,\bigcup)$ is a KZ-pseudomonad on $\sf{OPCA}$.
\end{prop}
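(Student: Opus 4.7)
Since $\sf{OPCA}$ is preorder-enriched, all 2-cells between parallel 1-cells are automatic and no coherence modifications need to be constructed or verified. The plan is therefore to check that $T$ extends to a pseudofunctor, that $\delta$ and $\bigcup$ are pseudonatural transformations, that the monad laws hold up to $\simeq$, and that the single inequality expressing the Kock--Z\"oberlein (lax-idempotent) property is satisfied.

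First I verify that $T$ is a pseudofunctor. For $f\colon A\to B$ tracked by $t\in B$ with order preserved up to $u\in B$, the map $Tf(\alpha)=\downset f(\alpha)$ is a morphism of OPCAs, tracked by $\downset\{t\}\in TB$ and with order preserved up to $\downset\{u\}$; monotonicity in $f$ under the hom-preorder is analogous. Functoriality $T(gf)\simeq (Tg)\cdot (Tf)$ requires two inclusions: $T(gf)(\alpha)\subseteq (Tg\cdot Tf)(\alpha)$ is immediate with realizer $\downset\{\sf{i}\}$, while the reverse requires the order-preserver $u_g$ of $g$ to turn $g(b)$ (with $b\leq f(a)$) into something $\leq (gf)(a)$, and this provides the realizer $\downset\{u_g\}$.

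Next, for (pseudo)naturality, the identities $\downset\{f(a)\}=\downset f(\downset\{a\})$ and $\downset f(\bigcup\mathcal{A})=\bigcup\{\downset f(\alpha):\alpha\in\mathcal{A}\}$ hold at the level of underlying functions and descend to $\simeq$ with $\downset\{\sf{i}\}$-type realizers. The monad laws then reduce to standard identities on unions of downsets: associativity $\bigcup\bigcup\mathcal{S}=\bigcup_{\mathcal{A}\in\mathcal{S}}\bigcup\mathcal{A}$, and the unit laws $\alpha=\bigcup_{a\in\alpha}\downset\{a\}$ and $\alpha=\bigcup\{\alpha\}$, each transferred to $\simeq$ with the same trivial realizers.

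Finally, for the KZ property I verify the inequality $T\delta_A\leq \delta_{TA}$, which together with the monad laws yields the defining chain of adjunctions $T\delta_A\dashv \bigcup_A\dashv \delta_{TA}$ characteristic of a lax-idempotent pseudomonad. At the underlying-function level: $T\delta_A(\alpha)=\downset\{\downset\{a\}:a\in\alpha\}$ consists of downsets contained in some $\downset\{a\}\subseteq\alpha$, hence lying in $\downset\{\alpha\}=\delta_{TA}(\alpha)$. The realizer can be taken to be $\downset\{\downset\{\sf{i}\}\}\in TTA$, using the fact that $\downset\{\sf{i}\}\cdot\beta\subseteq\beta$ in $TA$. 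The main obstacle is cosmetic: turning the evident set-theoretic containments into explicit realizers in iterated downset OPCAs, which amounts to routinely lifting the combinator $\sf{i}$ (or the order-preservers of given morphisms) one or two levels through the $\downset$ operation; no step involves any genuinely new idea.
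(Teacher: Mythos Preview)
Your approach is exactly what the paper hints at (the paper itself omits the proof, merely remarking that one adapts the nonempty-downset monad on posets by ``inserting realizers at appropriate positions''), and the outline is correct in shape. There is, however, one inaccuracy worth flagging: the claimed equality $\downset\{f(a)\}=\downset f(\downset\{a\})$ does \emph{not} hold at the level of underlying functions, because a morphism $f$ of OPCAs need not preserve the order on the nose. One only has the inclusion $\downset\{f(a)\}\subseteq \downset f(\downset\{a\})$; for the reverse direction you must use the order-preserver $u$ of $f$ (exactly as you correctly did for functoriality of $T$), so that $u\cdot f(a')\leq f(a)$ whenever $a'\leq a$ yields the realizer $\downset\{u\}$ for $Tf\circ\delta_A\leq \delta_B\circ f$. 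With this minor correction the argument goes through, and in particular your verification of the KZ inequality $T\delta_A\leq\delta_{TA}$ is fine.
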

The proof is very similar to case of the nonempty downset monad on the category of posets, but one has to insert some realizers at appropriate positions. We leave this to the reader.

\begin{defn}
The preorder-enriched category $\sf{PCA}$ is defined as the Kleisli category for the pseudomonad $T$. An arrow of $\sf{PCA}$ will be called an \emph{applicative morphism}, and will be denoted by $f\colon A\am B$. \ruitje
\end{defn}

Let us consider for a moment what this means. The objects of $\sf{PCA}$ are still OPCAs. An applicative morphism $f\colon A\am B$ is a morphism of OPCAs $f\colon A\to TB$. This means that $f$ does not assign an \emph{element} of $B$ to $a\in A$, but rather a (nonempty and downwards closed) set of elements. For this reason, we use the multimap sign $\am$ for applicative morphisms. The identity on $A$ is $\delta_A$, and the composition of $f\colon A\am B$ and $g\colon B\am C$ is $\bigcup_C\circ Tg\circ f$, i.e., $gf(c) = \bigcup_{b\in f(a)} g(b)$. The requirements for an applicative morphism can be reformulated completely in terms of elements of $B$ (rather than $TB$). It is convenient to use the following notation: if $a\in A$ and $\alpha\in TA$, then we write
\[
a\cdot\alpha := \downset\{a\}\cdot\alpha = \downset\{aa'\mid a'\in\alpha\}.
\]
Now, a function $f\colon A\to TB$ is an applicative morphism iff the following hold:
\begin{itemize}
\item	There exists an $r\in B$ such that $r\cdot f(a)\cdot f(a')\subseteq f(aa')$ whenever $aa'\denotes$; such an $r$ will also be called a tracker of $f$ (even though the tracker is really $\downset\{r\}\in TB$).
\item	There exists a $u\in B$ such that $u\cdot f(a')\subseteq f(a)$ whenever $a'\leq a$. We will say that $f$ preserves the order up to $u$
\end{itemize}
Similarly, if $f,f'\colon A\am B$, then we have that $f\leq f'$ iff there exists an $s\in B$ such that $s\cdot f(a)\subseteq f'(a)$ for all $a\in A$; and such an $s$ will be called a realizer of $f\leq f'$.

It turns out for applicative morphisms, one can get rid of the realizer $u$ above.
\begin{lem}
Every applicative morphism is isomorphic to an order-preserving applicative morphism.
\end{lem}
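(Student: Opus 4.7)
The plan is to make $f$ literally monotone by downward-closing in the input variable. Fix an applicative morphism $f\colon A\am B$, a tracker $r\in B$ of $f$, and a realizer $u\in B$ with $u\cdot f(a')\subseteq f(a)$ whenever $a'\leq a$. Define
$$
f'\colon A\to TB,\qquad f'(a)=\bigcup_{a'\leq a} f(a').
$$
Since $a\leq a$, the union contains $f(a)$ and is in particular nonempty; a union of downsets is a downset, so $f'(a)\in TB$. Strict order preservation is immediate from the construction: if $a\leq a''$, then the index range of the union only grows, so $f'(a)\subseteq f'(a'')$.

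Next I would verify that $f'$ is still an applicative morphism, and show in fact that the same tracker $r$ works. Given $c\in f'(a)$ and $d\in f'(a')$ with $aa'\denotes$, pick $c^*\leq a$ and $d^*\leq a'$ with $c\in f(c^*)$ and $d\in f(d^*)$. Axiom (0) of \cref{defn:OPAS} forces $c^* d^*\denotes$ with $c^* d^*\leq aa'$, whence
$$
rcd\in r\cdot f(c^*)\cdot f(d^*)\subseteq f(c^* d^*)\subseteq f'(aa').
$$
Since $f'$ is strictly monotone, $\sf{i}$ witnesses its order preservation.

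It remains to show $f\simeq f'$. The inclusion $f(a)\subseteq f'(a)$ (taking $a'=a$ in the union) shows that $f\leq f'$ is realized by $\sf{i}$. Conversely, if $b\in f'(a)$, pick $a'\leq a$ with $b\in f(a')$; then $ub\in u\cdot f(a')\subseteq f(a)$, so $u\cdot f'(a)\subseteq f(a)$, meaning $u$ realizes $f'\leq f$. The only step needing any care is the tracking computation for $f'$, which hinges on axiom (0) to tie together the order and the partial application on $A$; everything else is bookkeeping about downsets and indexed unions.
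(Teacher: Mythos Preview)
Your proof is correct and follows the same construction as the paper: define $f'(a)=\bigcup_{a'\leq a} f(a')$, then observe that $\sf{i}$ realizes $f\leq f'$ and $u$ realizes $f'\leq f$. The only difference is cosmetic: the paper deduces that $f'$ is an applicative morphism directly from the isomorphism $f\simeq f'$ rather than by exhibiting a tracker, but your explicit verification via axiom~(0) is equally valid.
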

\begin{proof}
Given $f\colon A\am B$, define $f'\colon A\am B$ by $f'(a) = \bigcup_{a'\leq a} f(a')$. Clearly, $\sf{i}\in B$ realizes $f\leq f'$, and if $f$ preserves the order up to $u\in B$, then $u$ realizes $f'\leq f$. So we have $f\simeq f'$, which also implies that $f'$ is, in fact, an applicative morphism. Clearly, $f'$ preserves the order on the nose.
\end{proof}

If $f\colon A\am B$ is an applicative morphism, then there exists an essentially unique \emph{$T$-algebra morphism} $\tilde{f}\colon TA\to TB$ such that the diagram
\[\begin{tikzcd}
A \arrow[rr, "f"] \arrow[rd, "\delta_A"'] &                             & TB \\
                                          & TA \arrow[ru, "\tilde{f}"'] &   
\end{tikzcd}\]
commutes. Explicitly, we have $\tilde{f} \simeq \bigcup_B\circ Tf$. It is well known from the general theory of (pseudo)monads that this yields an equivalence between $\sf{PCA}$ and the full subcategory of $T$-$\sf{Alg}$ on the free $T$-algebras. Moreover, it is easy to show that $\delta_A$ is c.d., so \cref{prop:cd} implies that $f$ is c.d.\@ iff $\tilde{f}$ is c.d. This means we have an unambiguous notion of computational density for applicative morphisms. Explicitly, there should be an $n\in B$ such that
\[
\forall s\in B\s \exists r\in A\s (n\cdot f(r)\subseteq\downset \{s\}).
\]
The results from \cref{prop:cd} automatically hold for $\sf{PCA}$ as well. For example, suppose that $f\colon A\am B$ and $g\colon B\am C$ are c.d. Then $\tilde{f}$ and $\tilde{g}$ are c.d., so by \cref{prop:cd}(i), $\widetilde{gf} \simeq \tilde{g}\tilde{f}$ is c.d., hence $gf$ is c.d.

Moreover, there exists a pseudofunctor $\sf{OPCA}\to\sf{PCA}$ sending a morphism $f\colon A\to B$ to $\delta_Bf\colon A\am B$. Because $\delta_B$ is always a pseudomono, this pseudofunctor is fully faithful on 2-cells. Furthermore, one easily shows that this pseudofunctor preserves and reflects computational density.
\begin{defn}
An applicative morphism $f\colon A\am B$ is called \emph{projective} if $f$ belongs to the essential image of $\sf{OPCA}\to\sf{PCA}$. Equivalently, if $\tilde{f}$ belongs to the essential image of $T$. \ruitje
\end{defn}
In other words, $f$ is projective iff there exists a morphism of OPCAs $f_0\colon A\to B$ such that $f\simeq \delta_B f_0$, and in this case, we have $\tilde{f}\simeq Tf_0$. In fact, it suffices that there be a \emph{function} $f_0\colon A\to B$ such that $f\simeq \delta_B f_0$; such an $f_0$ will the automatically be a morphism of OPCAs. At various occasions in the remainder of the paper, we will view morpismsm of OPCAs as projective applicative morpisms.

The following result was obtained in \cite{faberjaap} (Corollary 1.15), using an analysis of the corresponding realizability toposes (to be defined in \cref{sec:ass} below), but it can also be proved directly. It is worth noting that the proof uses the Axiom of Choice.

\begin{thm}\label{thm:criterion_rightadjoint}
An applicative morphism has a right adjoint in $\sf{PCA}$ if and only if it is both projective and c.d.
\end{thm}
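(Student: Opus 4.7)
I would prove the two implications separately.

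$(\Rightarrow)$ Suppose $f\colon A\am B$ has a right adjoint $g\colon B\am A$ in $\sf{PCA}$. Computational density of $f$ follows from the final clause of \cref{prop:cd}---the argument that left adjoints are c.d.\ is 2-categorical and applies in $\sf{PCA}$ just as in $\sf{OPCA}$. For projectivity, I extract a functional morphism $f_0\colon A\to B$ from the unit via AC. Let $r\in A$ realize $\delta_A\leq gf$; then for every $a\in A$ the element $r\cdot a$ lies in $gf(a)=\bigcup_{b\in f(a)}g(b)$, so by AC we can pick $f_0(a)\in f(a)$ with $r\cdot a\in g(f_0(a))$. The inequality $\delta_Bf_0\leq f$ is immediate from $f_0(a)\in f(a)$. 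For the reverse inequality, let $s\in B$ realize the counit $fg\leq\delta_B$. Applied with $a'=r\cdot a\in g(f_0(a))$ and $b=f_0(a)$, this yields $s\cdot b''\leq f_0(a)$ for every $b''\in f(r\cdot a)$. Using a tracker of $f$ in $B$ together with any fixed element of $f(r)$ to transport elements of $f(a)$ into $f(r\cdot a)$ produces a realizer in $B$ for $f\leq\delta_Bf_0$. Hence $f\simeq\delta_Bf_0$ is projective.

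$(\Leftarrow)$ Suppose $f\simeq\delta_Bf_0$ is projective with $f$ c.d. Since the pseudofunctor $\sf{OPCA}\to\sf{PCA}$ reflects computational density, $f_0\colon A\to B$ is c.d.\ in $\sf{OPCA}$, so by \cref{prop:cdm} there is $m\in B$ satisfying \cref{eq:cdm} for $f_0$. The plan is to define the right adjoint $g\colon B\am A$ by taking $g(b)$ to be a suitably chosen nonempty downset of $A$ assembled from the \cref{eq:cdm}-witnesses for $b$ (using AC at appropriate points), and then to verify that $g$ is an applicative morphism and that the unit $\delta_A\leq gf$ and counit $fg\leq\delta_B$ are realized. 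All the required realizers should be produced by combinatory completeness in $A$ or $B$ out of $m$, a tracker of $f_0$, and an order-realizer of $f_0$.

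The main obstacle is the construction of $g$ in $(\Leftarrow)$, and especially the exhibition of a tracker for $g$ in $A$. The natural defining condition for membership in $g(b)$ involves the element $m\in B$, yet a tracker for $g$ must live in $A$; so the definition of $g(b)$ has to be designed so that the uniformity of the \cref{eq:cdm} condition---which quantifies over \emph{all} $a\in A$---permits a single $\rho\in A$ to combine witnesses for $b$ and $b'$ into a witness for $bb'$. The $(\Rightarrow)$ direction and the verifications of the unit and counit in $(\Leftarrow)$, by contrast, amount to routine bookkeeping with realizers once AC has been invoked at the right moments.
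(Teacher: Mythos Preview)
Your proposal is correct and follows essentially the same route as the paper. One small remark: in the $(\Leftarrow)$ direction the paper does \emph{not} invoke AC. The right adjoint is defined by $g(b)=\downset\{a\in A\mid m\cdot f_0(a)\leq b\}$, taking \emph{all} elements satisfying the condition rather than choosing witnesses; nonemptiness follows from \cref{eq:cd}, and the tracker for $g$ in $A$ is produced exactly as you anticipate---by applying \cref{eq:cdm} to a single element $s\in B$ assembled from $m$, a tracker of $f_0$, and an order-realizer, yielding one $r\in A$ that works uniformly.
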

\begin{proof}
First, suppose that $f\colon A\am B$ has a right adjoint $g\colon B\am A$. We already know from \cref{prop:cd} that this implies that $f$ is c.d.  For projectivity, suppose that $r\in A$ realizes $\id_A\leq gf$ and $s\in B$ realizes $fg\leq\id_B$. Then for all $a\in A$, we have that $ra\denotes$ and $ra\in gf(a) = \bigcup_{b\in f(a)} g(b)$. By the Axiom of Choice, there exists a function $f_0\colon A\to B$ such that $f_0(a)\in f(a)$ and $ra\in g(f_0(a))$ for all $a\in A$. We claim that $f\simeq \delta_Bf_0$. First of all, we have that $\downset\{f_0(a)\}\subseteq f(a)$, so the identity combinator $\sf{i}$ realizes $\delta_Bf_0\leq f$. The converse inequality is realized by $s':= \lambda^* x.s(tr'x)\in B$, where $r'$ is an element from $f(r)$ and $t\in B$ tracks $f$. Indeed, if $b\in f(a)$, then $tr'b\in f(ra)\subseteq \bigcup_{a'\in g(f_0(a))} f(a') = fg(f_0(a))$. So we see that $s'b\preceq s(tr'b)$, which is defined and an element of $\id_B(f_0(a)) = \downset\{f_0(a)\}$, as desired.

For the converse, let $f\colon A\to B$ be a c.d.\@ morphism of OPCAs; we need to show that $f'=\delta_Bf\colon A\am B$ has a right adjoint $g\colon B\am A$. Let $m\in B$ satisfy \cref{eq:cdm} from \cref{prop:cdm} for $f$. We define $g\colon B\am A$ by:
\[
g(b) = \downset\{a\in A\mid m\cdot f(a)\leq b\}.
\]
First, let us show that $g$ is indeed an applicative morphism. Because $m$ also satisfies \cref{eq:cd} from \cref{defn:cd} for $f$, we know that $g(b)$ is nonempty for every $b\in B$. Moreover, $g$ clearly preserves the order on the nose. In order to construct a tracker, let
\[
s = \lambda^* x.m(u(t\cdot f(\sf{p}_0)\cdot x))(m(u(t\cdot f(\sf{p}_1)\cdot x))) \in B,
\]
where $t$ tracks $f$ and $f$ preserves the order up to $u$. Find $r\in A$ such that $m\cdot f(ra)\preceq s\cdot f(a)$, and define $q = \lambda^* xy.r(\sf{p}xy)\in A$. We claim that $q$ tracks $g$. We need to show that, if $bb'\denotes$, then
\[
q\cdot g(b)\cdot g(b') = \downset\{qaa'\mid m\cdot f(a)\leq b\mbox{ and }m\cdot f(a')\leq b\}
\]
is a subset of $g(bb')$. So suppose that $m\cdot f(a)\leq b$ and $m\cdot f(a')\leq b$. Then $qaa'\preceq r(\sf{p}aa')$ and:
\begin{align*}
m\cdot f(r(\sf{p}aa')) &\preceq s\cdot f(\sf{p}aa')\\
&\preceq m(u(t\cdot f(\sf{p}_0)\cdot f(\sf{p}aa')))(m(u(t\cdot f(\sf{p}_1)\cdot f(\sf{p}aa'))))\\
&\preceq m(u\cdot f(\sf{p}_0(\sf{p}aa')))(m(u\cdot f(\sf{p}_1(\sf{p}aa'))))\\
&\preceq m\cdot f(a)(m\cdot f(a'))\\
&\preceq bb',
\end{align*}
so $qaa'\in g(bb')$, as desired.

In order to establish the adjunction $f'\dashv g$, we first note that 
\[gf'(a) = \bigcup_{b\leq f(a)} g(b) = \downset\{a'\in A\mid m\cdot f(a')\leq f(a)\}.\]
According to \cref{eq:cdm}, there exists an $r\in A$ such that $m\cdot f(ra) \preceq \sf{i}\cdot f(a) \leq f(a)$ for all $a\in A$. This immediately implies that $ra\in gf'(a)$ for all $a\in A$, so $r$ realizes $\id_A\leq gf'$. Conversely, we have
\[
f'(g(b)) = \bigcup_{a\in g(b)} \downset\{f(a)\} = \downset\{f(a)\mid m\cdot f(a)\leq b\},
\]
so it is immediate that $m\in B$ realizes $f'g\leq\id_B$.
\end{proof}

We observe that, as an immediate corollary of this, any two OPCAs that are equivalent in $\sf{PCA}$ are already equivalent in $\sf{OPCA}$. This means that we can speak unambiguously about the equivalence of OPCAs.


\section{Products and coproducts in $\mathsf{PCA}$}\label{sec:prod_PCA}

In this section, we investigate to which extent the results from \cref{sec:prod_OPCA} carry over to the category $\sf{PCA}$. For pseudocoproducts, this is quite easy.

\begin{cor}
The pseudofunctor $\sf{OPCA}\to\sf{PCA}$ preserves finite pseudocoproducts. In particular, $\sf{PCA}$ has all finite pseudocoproducts.
\end{cor}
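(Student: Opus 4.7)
The plan is to show that the pseudofunctor sends the $\sf{OPCA}$-pseudocoproduct diagram $A_0 \stackrel{\kappa_0}{\to} A_0\times A_1 \stackrel{\kappa_1}{\leftarrow} A_1$ to a pseudocoproduct diagram in $\sf{PCA}$; since $\sf{OPCA}$ has finite pseudocoproducts by the previous section, this also establishes the second claim. The key technical point will be a single identity: for any applicative morphism $g\colon A_0\times A_1\am B$, viewed as a morphism $g\colon A_0\times A_1\to TB$ in $\sf{OPCA}$, the Kleisli composition $g\circ \delta_{A_0\times A_1}\kappa_i$ unfolds as $\bigcup_B\circ Tg\circ \delta_{A_0\times A_1}\circ \kappa_i$, which by (pseudo)naturality of $\delta$ equals $\bigcup_B\circ \delta_{TB}\circ g\circ\kappa_i$, and this reduces by the monad unit law to $g\circ\kappa_i$ (up to isomorphism). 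Everything will then be bootstrapped from the $\sf{OPCA}$-level universal property.

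For the nullary case, I would first verify that $\mathbf{1}$ is a pseudoinitial object in $\sf{PCA}$. An applicative morphism $\mathbf{1}\am A$ is just a choice of nonempty downset $\alpha\in TA$; given two such $\alpha,\beta$, picking any $b\in\beta$ yields a realizer $\sf{k}b\in A$, since $\sf{k}b\cdot a\leq b\in\beta$ for all $a\in\alpha$, so the morphism with value $\alpha$ is below the one with value $\beta$, and symmetrically. Thus all morphisms $\mathbf{1}\am A$ are isomorphic, and (the image of) $\un\colon \mathbf{1}\to A$ provides a representative.

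For the binary case, given applicative morphisms $f_0\colon A_0\am B$ and $f_1\colon A_1\am B$ (equivalently, morphisms $f_i\colon A_i\to TB$ in $\sf{OPCA}$), I would define the amalgamation as $f = [f_0,f_1]\colon A_0\times A_1\to TB$ using the pseudocoproduct property of $A_0\times A_1$ in $\sf{OPCA}$. Applying the key identity with $g=f$ gives $f\circ \delta_{A_0\times A_1}\kappa_i\simeq f\circ\kappa_i\simeq f_i$. For uniqueness of 2-cells, if $g,g'\colon A_0\times A_1\am B$ satisfy $g\circ\delta\kappa_i\leq g'\circ\delta\kappa_i$ in $\sf{PCA}$, the same reduction yields $g\circ\kappa_i\leq g'\circ\kappa_i$ in $\sf{OPCA}$, and the $\sf{OPCA}$-universal property of $A_0\times A_1$ forces $g\leq g'$.

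There is no deep obstacle here; the entire argument is a matter of carefully unwinding Kleisli composition and invoking the KZ-pseudomonad unit law. One could alternatively observe that the Kleisli inclusion $\sf{OPCA}\to\sf{PCA}$ is a left pseudoadjoint to the Kleisli forgetful pseudofunctor and hence preserves pseudocolimits, but since the direct computation above is short, I would carry it out by hand so that the reader immediately sees how the coprojections $\kappa_i$ are reinterpreted as $\delta\kappa_i$ in $\sf{PCA}$.
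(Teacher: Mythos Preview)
Your proposal is correct and follows essentially the same route as the paper: both reduce the $\sf{PCA}$-coproduct property to the $\sf{OPCA}$-coproduct property via the defining identification $\sf{PCA}(X,B)\simeq\sf{OPCA}(X,TB)$. The paper states this as a chain of hom-equivalences, whereas you unwind it explicitly by verifying that Kleisli precomposition with $\delta\kappa_i$ agrees with $\sf{OPCA}$-precomposition with $\kappa_i$; this is the implicit step behind the paper's one-line argument.
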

\begin{proof}
For every OPCA $A$, we have $\sf{PCA}(\mathbf{1},A)\simeq \sf{OPCA}(\mathbf{1},TA)$, which we know to be equivalent to the one-element preorder. Similarly, if $A_0$, $A_1$ and $B$ are OPCAs, then
\begin{align*}
\sf{PCA}(A_0\times A_1,B)&\simeq \sf{OPCA}(A_0\times A_1,TB)\\
&\simeq \sf{OPCA}(A_0,TB)\times \sf{OPCA}(A_1,TB)\\
&\simeq \sf{PCA}(A_0,B)\times \sf{PCA}(A_1,B),
\end{align*}
finishing the proof.
\end{proof}
Explicitly, if $f_0\colon A_0\am B$ and $f_1\colon A_1\am B$ are applicative morphisms, then their amalgamation $[f_0,f_1]\colon A_0\times A_1\am B$ is given by:
\[
[f_0,f_1](a_0,a_1) = \downset\{\sf{p}b_0b_1\mid b_0\in f_0(a_0)\mbox{ and }b_1\in f_1(a_1)\}.
\]
By \cref{prop:cd}(ii) (or rather, its counterpart for $\sf{PCA}$), we immediately have the following corollary.
\begin{cor}
If $f_0\colon A_0\am B$ and $f_1\colon A_1\am B$ are applicative morphisms and $f_0$ is c.d., then $[f_0,f_1]\colon A_0\times A_1\am B$ is also c.d.
\end{cor}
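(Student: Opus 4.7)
The plan is to deduce this from the pseudocoproduct universal property combined with \cref{prop:cd}(ii), whose validity in $\sf{PCA}$ was already noted in \cref{sec:PCA} (using the equivalence between $\sf{PCA}$ and the full subcategory of $T$-$\sf{Alg}$ on free $T$-algebras, together with the fact that $\delta_A$ is always c.d.). Specifically, the previous corollary tells us that $A_0 \stackrel{\kappa_0}{\am} A_0\times A_1 \stackrel{\kappa_1}{\am} A_1$ (viewed as projective applicative morphisms) is a pseudocoproduct diagram in $\sf{PCA}$, so the amalgamation is characterised up to isomorphism by $[f_0,f_1]\circ\kappa_0\simeq f_0$ and $[f_0,f_1]\circ\kappa_1\simeq f_1$.

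With this in hand, the argument is immediate: since $f_0$ is c.d.\ by hypothesis, the isomorphic composite $[f_0,f_1]\circ\kappa_0$ is c.d.\ as well, and invoking \cref{prop:cd}(ii) in $\sf{PCA}$ with $g=[f_0,f_1]$ and $f=\kappa_0$ yields that $[f_0,f_1]$ itself is c.d., as desired.

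I expect no substantive obstacle here: the proof is a formal consequence of the pseudocoproduct's universal property together with the closure of computational density under post-factorisation, and it mirrors essentially verbatim the corollary proved right before \cref{prop:OPCA_disjoint} in the $\sf{OPCA}$-setting. The only thing worth double-checking is that \cref{prop:cd}(ii) does transfer cleanly to $\sf{PCA}$, but this was already spelled out in \cref{sec:PCA} immediately after the definition of projective applicative morphisms.
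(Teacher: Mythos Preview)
Your proposal is correct and takes essentially the same approach as the paper: the paper's one-line justification ``By \cref{prop:cd}(ii) (or rather, its counterpart for $\sf{PCA}$)'' is exactly the argument you spell out, namely $[f_0,f_1]\circ\kappa_0\simeq f_0$ is c.d., hence $[f_0,f_1]$ is c.d.\ by post-factorisation.
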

Since $T\mathbf{1}\simeq\mathbf{1}$, we have that $\mathbf{1}$ is not only pseudoinitial in $\sf{PCA}$, but also pseudoterminal. Therefore, we also define zero morphisms in $\sf{PCA}$, by saying that $f\colon A\am B$ is a zero morphism iff it factors (in $\sf{PCA}$) through $\mathbf{1}$. This is in fact equivalent to $f\colon A\to TB$ being a zero morphism in $\sf{OPCA}$, which is equivalent to $\bigcap_{a\in A}f(a)\neq\emptyset$. The proof of the following proposition is now completely analogous to the proof \cref{prop:OPCA_disjoint}, and is therefore omitted.
\begin{prop}
Pseudocoproducts in $\sf{PCA}$ are disjoint.
\end{prop}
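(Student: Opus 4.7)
The plan is to transfer the proof of \cref{prop:OPCA_disjoint} to $\sf{PCA}$ via the pseudofunctor $\sf{OPCA}\to\sf{PCA}$. Since this pseudofunctor preserves finite pseudocoproducts (by the preceding corollary), the coprojections of the pseudocoproduct diagram $A_0\to A_0\times A_1\leftarrow A_1$ in $\sf{PCA}$ are simply $\delta_{A_0\times A_1}$ precomposed with the OPCA coprojections $\kappa_i$; and since $\pi_i\kappa_i\simeq\id_{A_i}$ in $\sf{OPCA}$, this relation persists under the pseudofunctor, so the $\kappa_i$ are pseudomonos in $\sf{PCA}$.

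For the pseudopullback requirement, I would take applicative morphisms $f_0\colon B\am A_0$ and $f_1\colon B\am A_1$ with $\kappa_0 f_0\simeq \kappa_1 f_1$ and show both are zero morphisms, using the characterization $f$ is a zero morphism iff $\bigcap_{a\in\dom f} f(a)\neq\emptyset$. First I would compute that
\[
(\kappa_0 f_0)(b)=\downset\{(a,\sf{i})\mid a\in f_0(b)\}\quad\text{and}\quad (\kappa_1 f_1)(b)=\downset\{(\sf{i},a)\mid a\in f_1(b)\},
\]
directly from the Kleisli composition formula. Now pick a realizer $s=(s_0,s_1)\in A_0\times A_1$ of $\kappa_0 f_0\leq \kappa_1 f_1$, i.e.\@ $s\cdot (\kappa_0 f_0)(b)\subseteq (\kappa_1 f_1)(b)$ for every $b\in B$. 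For any $b\in B$, picking any $a_0\in f_0(b)$ (which exists since $f_0(b)$ is nonempty), the element $(s_0 a_0, s_1\sf{i})$ must lie in $(\kappa_1 f_1)(b)$, so there exists $a_1\in f_1(b)$ with $(s_0a_0,s_1\sf{i})\leq(\sf{i},a_1)$; in particular $s_1\sf{i}\leq a_1$, and downward closure of $f_1(b)$ gives $s_1\sf{i}\in f_1(b)$. As this holds for every $b\in B$, we have $s_1\sf{i}\in\bigcap_{b\in B} f_1(b)$, showing $f_1$ is a zero morphism. The argument for $f_0$ is entirely symmetric, using the other coordinate and the realizer of $\kappa_1 f_1\leq\kappa_0 f_0$.

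There is no real obstacle here; the only point that deserves a moment of care is making sure the relation $\leq$ between applicative morphisms is unwound correctly (as inclusion $s\cdot f(-)\subseteq f'(-)$ rather than pointwise inequality), after which the inequality $s_1\sf{i}\leq a_1\in f_1(b)$ combined with downward closure gives membership on the nose, which is exactly what is needed to conclude that $f_1$ is a zero morphism. This is the sole stylistic difference from the OPCA version, where downward closure played no role because values were single elements.
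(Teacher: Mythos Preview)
Your proof is correct and is precisely the argument the paper has in mind: the paper omits the proof entirely, saying only that it is ``completely analogous'' to \cref{prop:OPCA_disjoint}, and what you have written is that analogous argument carried out in detail. The one adjustment you rightly highlight --- that from $s_1\sf{i}\leq a_1\in f_1(b)$ one must invoke downward closure of $f_1(b)$ to get $s_1\sf{i}\in f_1(b)$, rather than concluding an inequality of single elements --- is exactly the point where the $\sf{PCA}$ version differs from the $\sf{OPCA}$ version, and you handle it correctly.
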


If we want to show that $A_0\times A_1$ is also the pseudoproduct of $A_0$ and $A_1$ in $\sf{PCA}$, then we should show that $T(A_0\times A_1)\simeq TA_0\times TA_1$. However, it turns out that this is \emph{not} true in general, and that $\sf{PCA}$ does not have finite pseudoproducts. On the other hand, $A_0\times A_1$ is still a product of $A_0$ and $A_1$ in $\sf{PCA}$ in a weak sense. Explicitly, if $f_0\colon B\am A_0$ and $f_1\colon B\am A_1$, then there exists a \emph{maximal} mediating arrow $f\colon B\am A_0\times A_1$. Using the theory developed in \cref{sec:prod_OPCA}, we can tie things together quite nicely.

Because $T$ is a pseudofunctor, we have arrows $T\pi_0\dashv T\kappa_0\colon TA_0\to T(A_0\times TA_1)$ and $T\pi_1\dashv T\kappa_1\colon TA_1\to T(A_0\times TA_1)$ of $\sf{OPCA}_\adj$. By \cref{prop:coprod_OPCA_adj}, there exists a mediating arrow $h^*\dashv h_*\colon TA_0\times TA_1\to T(A_0\times TA_1)$. Explicitly, we have $h_*(\alpha_0,\alpha_1) = \alpha_0\times\alpha_1$ for $\alpha_i\in TA_i$, whereas
\begin{align*}
h^*(\alpha) &= (T\pi_0(\alpha),T\pi_1(\alpha))\\
&= (\{a_0\in A_0\mid \exists a_1\in A_1\s ((a_0,a_1)\in \alpha)\},\{a_1\in A_1\mid \exists a_0\in A_0\s ((a_0,a_1)\in \alpha)\})
\end{align*}
for $\alpha\in T(A_0\times A_1)$. One easily computes that $h^* h_*$ is in fact isomorphic to $\id_{TA_0\times TA_1}$. (This also follows from the fact that $T\pi_i\circ T\kappa_i\simeq \id_{TA_i}$, whereas $T\pi_j\circ T\kappa_i$ is a zero morphism for $i\neq j$.) Now we see that
\begin{align*}
\sf{PCA}(B,A_0)\times \sf{PCA}(B,A_1) &\simeq \sf{OPCA}(B,TA_0)\times \sf{OPCA}(B,TA_1)\\
&\simeq \sf{OPCA}(B,TA_0\times TA_1)\\
&\leftrightarrows \sf{OPCA}(B,T(A_0\times A_1))\\
&\simeq \sf{PCA}(B,A_0\times A_1),
\end{align*}
where
\[
\begin{tikzcd}
{\sf{OPCA}(B,TA_0\times TA_1)} \arrow[rr, "h_*\circ -"', shift right=2.5] \arrow[rr, phantom, "\perp" description] &  & {\sf{OPCA}(B,T(A_0\times TA_1))} \arrow[ll, " h^*\circ -"', shift right=2]
\end{tikzcd}
\]
is an adjunction whose counit is an isomorphism. In particular, if $f_0\colon B\am A_0$ and $f_1\colon B\am A_1$ are applicative morphisms, then
\[
\begin{tikzcd}[column sep=large]
B \arrow[r, "{\langle f_0,f_1\rangle}"] & TA_0\times TA_1 \arrow[r, "h_*"] & T(A_0\times A_1)
\end{tikzcd}
\]
is the maximal mediating applicative morphism $B\am A_0\times A_1$. Conversely, $g\colon B\am A_0\times A_1$ is such a maximal mediating morphism iff $g\colon B\to T(A_0\times A_1)$ factors through $h_*$; or equivalently, $h_* h^* g\simeq g$. Observe that this includes all \emph{projective} $g\colon B\am A_0\times A_1$. Indeed if $g\simeq \delta_{A_0\times A_1}\circ g_0$ with $g_0\colon B\to A_0\times A_1$, then we also have $g\simeq \delta_{A_0\times A_1} \circ g_0 \simeq h_\ast \circ(\delta_{A_0}\times \delta_{A_1})\circ g_0$.

The above shows that pseudoproducts exist in in $\sf{PCA}$ in a weak sense. Now let us turn to the existence of actual pseudoproducts in $\sf{PCA}$. Obviously, if $A_0$ (resp.\@ $A_1$) is trivial, then the pseudoproduct of $A_0$ and $A_1$ exists in $\sf{PCA}$, and it is equivalent to $A_1$ (resp.\@ $A_0$). Using the morphism $h^\ast$ above, we can show that this is the \emph{only} situation in which $A_0$ and $A_1$ have a product in $\sf{PCA}$.
\begin{thm}\label{thm:no_prod}
If $A_0$ and $A_1$ are OPCAs that have a pseudoproduct in $\sf{PCA}$, then at least one of $A_0$ and $A_1$ is trivial.
\end{thm}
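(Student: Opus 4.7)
Assume for contradiction that both $A_0$ and $A_1$ are nontrivial and that a pseudoproduct $P$ of them exists in $\sf{PCA}$, with projections $p_i\colon P\am A_i$. The strategy is to first show that the universal property forces enough adjunctions to make $p_0,p_1$ projective and c.d., and then use the morphism $h^\ast$ to derive a contradiction.

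For the first part, apply the pseudoproduct universal property to the pair $(\id_{A_i},z)$, where $z$ is a zero morphism, to obtain a section $u_i\colon A_i\am P$ of $p_i$ with $p_{1-i}u_i$ a zero morphism. The crucial observation is that the pseudoproduct 2-cell property gives the inequality $\id_P\leq u_ip_i$ \emph{for free}: checking it via the equivalence $\sf{PCA}(P,P)\simeq\sf{PCA}(P,A_0)\times\sf{PCA}(P,A_1)$, the two component conditions reduce to $p_i\leq p_i$ (trivial) and $p_{1-i}\leq(\text{zero})$ (trivial, because zero morphisms are top in the hom-preorder). Combined with $p_iu_i\simeq\id_{A_i}$, this yields adjunctions $p_i\dashv u_i$ in $\sf{PCA}$. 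By \cref{thm:criterion_rightadjoint}, each $p_i$ is then both projective and c.d., so $p_i\simeq\delta_{A_i}\hat p_i$ for some c.d.\ morphism $\hat p_i\colon P\to A_i$ of OPCAs. The maximal mediator $\mu\colon P\am A_0\times A_1$, explicitly $\mu(x)=p_0(x)\times p_1(x)$, therefore equals $\delta_{A_0\times A_1}\hat p$ with $\hat p=\langle\hat p_0,\hat p_1\rangle$ c.d.\ by \cref{prop:cd_prod_OPCA}. Applying \cref{thm:criterion_rightadjoint} to $\mu$ itself, $\mu$ has a right adjoint in $\sf{PCA}$, and uniqueness of right adjoints together with the pseudoproduct characterization identifies this adjoint with the canonical mediator $\nu\colon A_0\times A_1\am P$ for $(\pi_0,\pi_1)$. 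Fully-faithfulness at $B=P$ (both $\nu\mu$ and $\id_P$ are sent to $(p_0,p_1)$) gives $\nu\mu\simeq\id_P$, exhibiting $\mu$ as a fully faithful left adjoint.

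For the second part, we use the adjunction $h^\ast\dashv h_\ast$ between $T(A_0\times A_1)$ and $TA_0\times TA_1$. Since $P$ is a pseudoproduct, 2-Yoneda (applied to $\sf{OPCA}(B,TP)\simeq\sf{OPCA}(B,TA_0\times TA_1)$) yields an equivalence $TP\simeq TA_0\times TA_1$ in $\sf{OPCA}$, realized up to isomorphism by $\langle\tilde p_0,\tilde p_1\rangle\simeq h^\ast\tilde\mu$. Combining this with the fully faithful embedding $\hat p\colon P\to A_0\times A_1$, the image $T\hat p(TP)\subseteq T(A_0\times A_1)$ must lie, up to realizable isomorphism, in the essential image of $h_\ast$, that is, it must consist only of ``rectangular'' downsets $\alpha\times\beta$. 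One now chooses test elements exploiting nontriviality: in the non-pseudotrivial case, pick $a_i,a_i'\in A_i$ without common lower bound, so that $\downset\{(a_0,a_1),(a_0',a_1')\}$ is \emph{not} a rectangle yet arises as $T\hat p$ of some downset in $TP$; in the pseudotrivial nontrivial case, use a cardinality/realizer obstruction in the spirit of \cref{ex:cd_prod_OPCA}. In either case, the forced equality with the rectangularization $\downset\{a_0,a_0'\}\times\downset\{a_1,a_1'\}$ produces a uniform realizer that collapses one of the factors onto a bottom element, contradicting its nontriviality. The main obstacle is Step~3: translating the abstract ``image must be rectangular'' condition into a concrete realizer-level contradiction that handles both the non-pseudotrivial and the pseudotrivial nontrivial cases in a unified way.
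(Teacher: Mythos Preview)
Your first part is on the right track and reaches essentially the same adjunction $\mu\dashv\nu$ that the paper obtains, though you take a detour. The paper argues more directly: with $\nu\colon A_0\times A_1\am P$ the mediator for $(\pi_0,\pi_1)$ and $\mu\colon P\am A_0\times A_1$ the maximal mediator for $(p_0,p_1)$, one gets $\nu\mu\simeq\id_P$ from full faithfulness (exactly as you say) and $\mu\nu\leq\id_{A_0\times A_1}$ from the fact that $\id_{A_0\times A_1}$, being projective, is itself a maximal mediator for $(\pi_0,\pi_1)$. These two inequalities already give $\mu\dashv\nu$, so there is no need to first prove each $p_i$ projective and c.d.\ via \cref{thm:criterion_rightadjoint}. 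Either way, passing to free $T$-algebras and using the equivalence $e\colon TP\to TA_0\times TA_1$ shows $h^\ast\simeq e\tilde\nu$, hence $\tilde\mu e^{-1}\dashv h^\ast$. The consequence you should extract here, and which you never state, is that $h^\ast$ is therefore \emph{discrete} (\cref{prop:cd}).

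Your second part has a genuine gap. The claim that the image of $T\hat p=\tilde\mu$ must lie, up to realizable isomorphism, in the essential image of $h_\ast$ is not justified and is in fact false in general: what you have established is that $\tilde\mu e^{-1}$ is the \emph{left} adjoint section of $h^\ast$, whereas $h_\ast$ is the \emph{right} adjoint section, and a left and a right adjoint section of the same map need not have the same image. Consequently the ``non-rectangular downset in the image'' argument never gets started, and your proposed case split (non-pseudotrivial versus pseudotrivial nontrivial, the latter by a cardinality trick in the style of \cref{ex:cd_prod_OPCA}) is both unmotivated and, as you acknowledge, not actually carried out.

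The paper closes this gap cleanly using discreteness of $h^\ast$, with no case analysis. Set
\[
X=\{\alpha\in T(A_0\times A_1)\mid h^\ast(\alpha)=(A_0,A_1)\}.
\]
Since $h^\ast(X)=\{(A_0,A_1)\}$ trivially has a lower bound in $TA_0\times TA_1$, discreteness forces $X$ to have a lower bound in $T(A_0\times A_1)$, i.e.\ $\bigcap X\neq\emptyset$. But nontriviality of \emph{both} $A_i$ gives $\bigcap X=\emptyset$: for any $(a_0,a_1)\in A_0\times A_1$, the downset $\alpha=\{(b_0,b_1)\mid a_0\nleq b_0\text{ or }a_1\nleq b_1\}$ lies in $X$ (since $a_i$ is not least, one finds $b_i$ with $a_i\nleq b_i$, whence $\{b_i\}\times A_{1-i}\subseteq\alpha$ and so $T\pi_{1-i}(\alpha)=A_{1-i}$), yet $(a_0,a_1)\notin\alpha$. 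This single construction handles pseudotrivial and non-pseudotrivial OPCAs uniformly.
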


\begin{proof}
The proof is divided into two parts.
\begin{enumerate}
\item	First, we show that $h^\ast\colon T(A_0\times A_1)\to TA_0\times TA_1$ has a left adjoint, and is therefore discrete.
\item	Second, we show that $h^\ast$ cannot be discrete if $A_0$ and $A_1$ are both nontrivial.
\end{enumerate}
For the first part, denote the pseudoproduct projections $TA_0\times TA_1\to TA_i$ by $\rho_i$; then $h^\ast$ is the essentially unique morphism such that
\[
\begin{tikzcd}
T(A_0\times A_1) \arrow[rr, "h^*"] \arrow[rd, "T\pi_i"'] &      & TA_0\times TA_1 \arrow[ld, "\rho_i"] \\
                                                            & TA_i &                                     
\end{tikzcd}
\]
commutes up to isomorphism, for $i=0,1$.

Suppose that $C$ is a pseudoproduct of $A_0$ and $A_1$ in $\sf{PCA}$, with projections $\sigma_i\colon C\am A_i$. Then $\sigma_0$ and $\sigma_1$ induce a maximal mediating arrow $f\colon C\am A_0\times A_1$. On the other hand, $\pi_0$ and $\pi_1$, seen as projective applicative morphisms, induce a unique mediating map $g\colon A_0\times A_1\am C$. So for $i=0,1$ we get a diagram in $\sf{PCA}$:
\begin{equation}\label{diag:product}
\begin{tikzcd}
A_0\times A_1 \arrow[rr, -multimap, "g"', shift right] \arrow[rd, "\pi_i"'] &     & C \arrow[ll, -multimap, "f"', shift right] \arrow[ld, -multimap, "\sigma_i"] \\
                                                                 & A_i &                                                       
\end{tikzcd}
\end{equation}
where the triangles commute up to isomorphism. Since $C$ is a pseudoproduct, we have $gf\simeq\id_C$. Moreover, we have $\pi_i fg\simeq \sigma_i g\simeq \pi_i \simeq \pi_i\circ \id_{A_0\times A_1}$ for $i=0,1$, and since $\id_{A_0\times A_1}$ is certainly projective, this yields $fg\leq \id_{A_0\times A_1}$. We can conclude that $f\dashv g$.

For every OPCA $B$, we have natural equivalences
\begin{align*}
\sf{OPCA}(B,TC) &\simeq \sf{PCA}(B,C)\\
&\simeq \sf{PCA}(B,A_0)\times\sf{PCA}(B,A_1)\\
&\simeq \sf{OPCA}(B,TA_0)\times\sf{PCA}(B,TA_1),
\end{align*}
so $TA_0\stackrel{\tilde{\sigma}_0}{\longleftarrow} TC\stackrel{\tilde{\sigma}_1}{\longrightarrow} TA_1$ is a product diagram in $\sf{OPCA}$. This means there exists an equivalence $\iota\colon TC\to TA_0\times TA_1$ such that the diagram
\[
\begin{tikzcd}
TC \arrow[rr, "\iota"] \arrow[rd, "\tilde{\sigma}_i"'] &      & TA_0\times TA_i \arrow[ld, "\rho_i"] \\
                                                         & TA_i &                                  
\end{tikzcd}
\]
commutes up to isomorphism for $i=0,1$. Taking the image of the diagram \cref{diag:product} under the equivalence between $\sf{PCA}$ and free $T$-algebras, we get the diagram
\[\begin{tikzcd}
T(A_0\times A_1) \arrow[rd, "T\pi_i"'] \arrow[r, "\tilde{g}"', shift right] & TC \arrow[r, "\iota"] \arrow[d, "\tilde{\sigma}_i"] \arrow[l, shift right, "\tilde{f}"'] & TA_0\times TA_1 \arrow[ld, "\rho_i"] \\
                                                                            & TA_i                                                                        &                                     
\end{tikzcd}\]
in $\sf{OPCA}$ for $i=0,1$, where all triangles commute up to isomorpism. In particular, $\rho_i\iota \tilde{g}\simeq \tilde{\sigma}_i \tilde{g}\simeq T\pi_i$, so $\iota\tilde{g}$ must be isomorphic to $h^*$. Since $f\dashv g$, we also have $\tilde{f}\dashv \tilde{g}$, hence also $\tilde{f}\iota^{-1}\dashv \iota \tilde{g}\simeq h^*$. We conclude that $h^*$ has a left adjoint, so by \cref{prop:cd}, $h^*$ is discrete.

For the second part, suppose that $A_0$ and $A_1$ are both nontrivial, and that $h^\ast$ is discrete. Consider the set
\[
X \subseteq \{\alpha\in T(A_0\times A_1)\mid h^\ast(\alpha) = (A_0,A_1)\}.
\]
We claim that $\bigcap X$ is empty. Let $(a_0,a_1)\in A_0\times A_1$ be arbitrary, and consider the downset
\[
\alpha = \{(b_0,b_1)\in A_0\times A_1\mid a_0\nleq b_0\mbox{ or }a_1\nleq b_1\}
\]
of $A_0\times A_1$. Since $a_0$ is, by assumption, not the least element of $A_0$, there exists a $b_0\in A_0$ such that $a_0\nleq b_0$. This implies that $\{b_0\}\times A_1\subseteq \alpha$, so $\alpha$ is nonempty and satisfies $T\pi_1(\alpha) = A_1$. Similarly, we show that $T\pi_0(\alpha) = A_0$, so $\alpha\in X$. On the other hand, we clearly do \emph{not} have $(a_0,a_1)\in \alpha$, so $(a_0, a_1)\not\in \bigcap X$. Since this holds for all $(a_0,a_1)\in A_0\times A_1$, we can conclude that $\bigcap X=\emptyset$.

But $h^\ast(X) = \{(A_0, A_1)\}$ obviously has a lower bound in $TA_0\times TA_1$, so since $h^\ast$ is discrete, $X$ should have a lower bound in $T(A_0\times A_1)$. However, this is impossible given that $\bigcap X$ is empty, so we have reached a contradiction.
\end{proof}

We close this section by investigating, in analogy with $\sf{OPCA}_\adj$, the category $\sf{PCA}_\adj$.

\begin{defn}\label{defn:PCAadj}
The preorder-enriched category $\sf{PCA}_\adj$ is defined as follows.
\begin{itemize}
\item	Its objects are OPCAs.
\item	An arrow $f\colon A\to B$ is a pair of applicative morphisms $f^*\colon B\am A$ and $f_*\colon A\am B$ with $f^*\dashv f_*$.
\item	If $f,g\colon A\to B$, then we say that $f\leq g$ if $f^*\leq g^*$; equivalently, if $g_*\leq f_*$. \ruitje
\end{itemize}
\end{defn}

From \cref{thm:criterion_rightadjoint}, we know that $\sf{PCA}_\adj$ is actually equivalent to $\sf{OPCA}_\text{cd}^\text{op}$, where $\sf{OPCA}_\text{cd}$ denotes the wide subcategory of $\sf{OPCA}$ on the c.d.\@ morphisms, and $(\cdot)^\text{op}$ indicates a reversal of the \emph{1-cells}. The following result is now immediate.
\begin{cor}\label{cor:coprod_PCA_adj}
The category $\sf{PCA}_\adj$ has finite pseudocoproducts. Moreover, the pseudoinitial object is strict, and pseudocoproducts are disjoint.
\end{cor}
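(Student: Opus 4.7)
The plan is to exploit the equivalence $\sf{PCA}_\adj \simeq \sf{OPCA}_\text{cd}^{\text{op}}$ noted just before the corollary, which reduces all three assertions to establishing that $\sf{OPCA}_\text{cd}$ has a strict pseudoterminal object and binary pseudoproducts, and that these products satisfy the dual of disjointness (namely that the projections are pseudoepis and form a pseudopushout square with apex $\mathbf{1}$). Each dual statement has already been proved in $\sf{OPCA}$ itself; the only real work is showing that it survives restriction to the wide subcategory $\sf{OPCA}_\text{cd}$.

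For the pseudoterminal object, I would take $\mathbf{1}$ and observe that $!\colon A\to\mathbf{1}$ is always c.d.\@ (this appears in the proof of \cref{prop:cd_prod_OPCA}, via the adjunction $!\dashv \un$), so $\mathbf{1}$ is pseudoterminal in $\sf{OPCA}_\text{cd}$. Strictness of the pseudoinitial object in $\sf{PCA}_\adj$ amounts to showing that every morphism $\mathbf{1}\to A$ in $\sf{OPCA}_\text{cd}$ is an equivalence; but such a morphism is precisely a c.d.\@ $\un$, and by \cref{lem:trivial}(iv) this forces $A$ to be trivial, i.e., equivalent to $\mathbf{1}$ in $\sf{OPCA}$ and hence in $\sf{PCA}_\adj$.

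For binary pseudoproducts in $\sf{OPCA}_\text{cd}$ I would reuse $A_0\times A_1$ with projections $\pi_0,\pi_1$, which are c.d.\@ (this is again visible in the proof of \cref{prop:cd_prod_OPCA}). Given c.d.\@ morphisms $f_0\colon B\to A_0$ and $f_1\colon B\to A_1$, the amalgamation $\langle f_0,f_1\rangle$ is c.d.\@ by \cref{prop:cd_prod_OPCA} itself, and its 2-dimensional universal property is inherited from the 2-product property in $\sf{OPCA}$. Dualising, this yields finite pseudocoproducts in $\sf{PCA}_\adj$.

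For disjointness, I would translate the condition to $\sf{OPCA}_\text{cd}$: the coprojections of the coproduct in $\sf{PCA}_\adj$ correspond to the projections $\pi_i$ in $\sf{OPCA}_\text{cd}$, and disjointness becomes the pseudopushout condition of \cref{prop:dual_disjoint} inside $\sf{OPCA}_\text{cd}$. The pseudoepi property for $\pi_i$ is immediate since $\pi_i\kappa_i\simeq\id_{A_i}$, so any two c.d.\@ morphisms out of $A_i$ that become equal after composition with $\pi_i$ are already equal. The main step, and the only point where one must be slightly careful, is the pushout square: given c.d.\@ $f_0\colon A_0\to B$ and $f_1\colon A_1\to B$ with $f_0\pi_0\simeq f_1\pi_1$, \cref{prop:dual_disjoint} forces $f_0$ and $f_1$ to be zero morphisms, hence each factors (up to iso) as $\un\circ !$; but then $\un\colon\mathbf{1}\to B$ is the second factor of a c.d.\@ composite with a c.d.\@ first factor, and \cref{prop:cd}(ii) yields that $\un$ itself is c.d., so $B$ is trivial. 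Once $B\simeq \mathbf{1}$ the mediating morphism exists uniquely (up to iso) in $\sf{OPCA}_\text{cd}$, completing the pseudopushout. The key conceptual obstacle is exactly this last step, where one must verify that the factorisation through $\mathbf{1}$ can be performed within the subcategory of c.d.\@ morphisms; \cref{prop:cd}(ii) is what makes this painless.
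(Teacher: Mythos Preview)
Your proposal is correct and follows the same route as the paper: dualise via the equivalence $\sf{PCA}_\adj\simeq\sf{OPCA}_\text{cd}^{\text{op}}$, invoke \cref{prop:cd_prod_OPCA} for finite pseudoproducts, \cref{lem:trivial} for strictness, and \cref{prop:dual_disjoint} for disjointness. Your treatment is in fact more careful than the paper's terse citation of \cref{prop:dual_disjoint}: you explicitly verify that the pseudopushout survives restriction to $\sf{OPCA}_\text{cd}$ by using \cref{prop:cd}(ii) to force $B$ to be trivial, a step the paper leaves to the reader. (One minor remark: in that step you do not actually need the first factor $!$ to be c.d.; \cref{prop:cd}(ii) only requires the composite to be c.d.)
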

\begin{proof}
It suffices to prove the dual statements in $\sf{OPCA}_\text{cd}$. By \cref{prop:cd_prod_OPCA}, $\sf{OPCA}_\text{cd}$ has \emph{finite} pseudoproducts. Moreover, by \cref{lem:trivial}, the terminal object is strict in $\sf{OPCA}_\text{cd}$. The final statement is \cref{prop:dual_disjoint}.
\end{proof}


\section{The realizability topos}\label{sec:ass}

In this final section, we briefly investigate what we can say about coproducts of the \emph{realizability toposes} associated to OPCAs; in particular, to which extent realizability toposes are closed under coproducts. First, let us give the appropriate definitions.

\begin{defn}
Let $A$ be an OPCA.
\begin{itemize}
\item[(i)]	An \emph{assembly} over $A$ is a pair $X = (|X|,E_X)$, where $|X|$ is a set, and $E_X$ is a function $|X|\to TA$.
\item[(ii)]	A \emph{morphism of assemblies} $X\to Y$ is a function $f\colon X\to Y$ for which there exists an $r\in A$ (called a \emph{tracker} of $f$) such that $r\cdot E_X(x)\subseteq E_Y(f(x))$ for all $x\in |X|$. \ruitje
\end{itemize}
\end{defn}
Assemblies and morphisms between them form a quasitopos $\asm(A)$. Moreover, there is an obvious forgetful funtor $\Gamma_A\colon \asm(A)\to\set$ sending $X$ to $|X|$, and there is a functor $\nabla_A\colon\set\to\asm(A)$, sending a set $Y$ to the assembly $(Y,y\mapsto A)$. These functors are both regular, and they satisfy $\Gamma_A\dashv\nabla_A$ with $\Gamma_A\nabla_A\cong\id_\set$.

The ex/reg completion of $\asm(A)$ turns out to be a topos, which is called the \emph{realizability topos} of $A$ and denoted by $\rt(A)$. Since there is an inclusion $\asm(A)\mono\rt(A)$, we can also view $\nabla_A$ as a functor $\set\to \rt(A)$. Moreover, since $\Gamma_A$ is regular and $\set$ is exact, $\Gamma_A$ may be lifted to a functor $\rt(A)\to \set$, which we denote by $\hat{\Gamma}_A$. This yields an adjunction
\[
\begin{tikzcd}
\set \arrow[r, "\nabla_A"', shift right] & \rt(A) \arrow[l, "\hat{\Gamma}_A"', shift right]
\end{tikzcd}
\]
where $\hat{\Gamma}_A\nabla_A\cong\id_\set$ and $\hat{\Gamma}_A$ preserves finite limits. This means that $\set$ is a subtopos of $\rt(A)$, and in fact, this is precisely the inclusion of double negation sheaves. The $\neg\neg$-separated objects are precisely those objects that are isomorphic to an assembly.

The following result was first obtained by J.\@ Longley for the unordered case (\cite{longleyphd}, Theorem 2.3.4), and generalized to OPCAs in \cite{hofstrajaap}. We denote by $\reg$ the 2-category of regular categories, regular functors, and natural transformations. Moreover, $\reg/\set$ will denote the pseudoslice of $\reg$ over $\set$, i.e., its objects are regular functors with codomain $\set$, its 1-cells are triangles that commute up to specified isomorphism, and its 2-cells are natural transformations that are compatible with these specified isomorphisms.

\begin{thm}\label{thm:longley}
The assignment $A\mapsto (\Gamma_A\colon \asm(A)\to\set)$ may be extended to a local equivalence $\sf{PCA}\to \reg/\set$.
\end{thm}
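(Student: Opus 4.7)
The plan is to define the pseudofunctor $\sf{PCA}\to\reg/\set$ explicitly, and then verify it induces an equivalence on each hom-preorder $\sf{PCA}(A,B)\to\reg/\set(\Gamma_A,\Gamma_B)$. On a 1-cell $f\colon A\am B$, define $F_f\colon \asm(A)\to\asm(B)$ by $F_f(X,E_X) = (X,\tilde{f}\circ E_X)$ and $F_f(g)=g$ on underlying functions: a tracker of $F_f(g)$ in $B$ is obtained by composing a tracker of $g$ in $A$ with a tracker of $f$. Clearly $\Gamma_B F_f = \Gamma_A$ on the nose, and $F_f$ preserves finite limits and regular epis, since these are computed on underlying sets in $\asm$ with pointwise combinations of existence predicates. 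A realizer $s\in B$ of $f\leq f'$ gives a natural transformation $F_f\To F_{f'}$ whose components are identities on underlying sets, and is therefore automatically a 2-cell in $\reg/\set$.

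For local faithfulness and fullness, observe that any 2-cell $\alpha\colon F_f\To F_{f'}$ in $\reg/\set(\Gamma_A,\Gamma_B)$ has each component an identity on underlying sets. Testing on the ``generic'' singleton assemblies $[a] = (\{\ast\},\ast\mapsto\downset\{a\})$, a tracker of $\alpha_{[a]}$ produces an element of $B$ witnessing $\tilde{f}(\downset\{a\})\subseteq \tilde{f'}(\downset\{a\})$ uniformly in $a\in A$; naturality of $\alpha$ along the morphisms $[a']\to[a]$ (which exist whenever $a'\leq a$) forces a single uniform realizer, so $\alpha$ corresponds to an inequality $f\leq f'$ in $\sf{PCA}$, and conversely any such realizer produces $\alpha$.

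The substantial step is essential surjectivity on 1-cells: given a regular functor $F\colon \asm(A)\to\asm(B)$ together with an isomorphism $\sigma\colon \Gamma_B F\cong \Gamma_A$, we construct an applicative morphism $f\colon A\am B$ with $F\cong F_f$ over $\Gamma$. Define $f(a) = E_{F[a]}(\sigma_{[a]}^{-1}(\ast))\in TB$, using the Axiom of Choice if necessary. The morphisms $[a']\to[a]$ (for $a'\leq a$), as well as a morphism from the ``application assembly'' $(\{(a,b)\in A\times A\mid ab\denotes\},(a,b)\mapsto\downset\{a\}\cdot\downset\{b\})$ to $\mathbf{A}:=(A, a\mapsto \downset\{a\})$ realizing application, become morphisms of assemblies over $B$ when hit by $F$; their trackers supply exactly the order-preservation and application trackers required of $f$. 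To see $F\cong F_f$, check agreement on the singletons $[a]$ (which is tautological from the definition of $f$) and extend to arbitrary assemblies by using that every $(X,E_X)$ is a regular quotient of $\coprod_{x\in|X|,\,a\in E_X(x)}[a]$, whose image under $F$ and under $F_f$ agree by regularity and preservation of $\Gamma$.

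The main obstacle will be essential surjectivity, specifically isolating the right generating family of assemblies and extracting trackers for $f$ from the images of structural morphisms of assemblies under $F$. Once $f$ is in hand and the comparison $F\cong F_f$ is constructed, compatibility of the whole assignment with composition and 2-cells reduces to routine tracker manipulations analogous to those already carried out in \cref{sec:PCA} and \cref{sec:prod_PCA}.
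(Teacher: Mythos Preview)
The paper does not actually prove this theorem: it is stated with a citation to Longley's thesis (unordered case) and to Hofstra--van Oosten (ordered case), and no argument is given in the paper itself. So there is no ``paper's own proof'' to compare against; your sketch is being measured against the standard argument in those references.

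Your outline follows the standard route, but there is a genuine gap in how you extract \emph{uniform} realizers. In the local fullness step you test $\alpha$ on the singleton assemblies $[a]$ and claim that ``naturality of $\alpha$ along the morphisms $[a']\to[a]$ forces a single uniform realizer''. It does not: each $\alpha_{[a]}$ is a morphism of assemblies over $B$ and therefore has \emph{some} tracker, but these trackers may vary with $a$, and naturality squares only say that certain functions on underlying sets commute---they impose no constraint relating the trackers. The fix is to test $\alpha$ on the single assembly $\mathbf{A}=(A,\delta_A)$ that you already introduce later: $F_f(\mathbf{A})=(A,a\mapsto f(a))$, the component $\alpha_{\mathbf{A}}$ is the identity on $A$, and a tracker of this one morphism is exactly a realizer of $f\leq f'$.

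The same issue recurs in your essential surjectivity argument. Defining $f(a)$ via the singletons $[a]$ is harmless (and is isomorphic to defining it via $F(\mathbf{A})$), and your use of a single ``application assembly'' to obtain the tracker $t$ is correct. But obtaining the order-compatibility realizer $u$ from ``the morphisms $[a']\to[a]$'' has the same defect: these are infinitely many separate morphisms, each with its own tracker after applying $F$. You need a single morphism of assemblies that packages all instances $a'\leq a$ at once, e.g.\ the projection from $(\{(a',a)\mid a'\leq a\},(a',a)\mapsto\downset\{a'\})$ to $\mathbf{A}$ sending $(a',a)\mapsto a$; a tracker of its image under $F$ is the required uniform $u$. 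With these repairs your sketch is essentially the Longley argument.
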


Let $A_0$ and $A_1$ be OPCAs. The pseudocoproduct of $\rt(A_0)$ and $\rt(A_1)$, in the 2-category of toposes and geometric morphisms, is the product category $\rt(A_0)\times \rt(A_1)$. In this topos, the logic may be computed componentwise, which implies that its subtopos of double negation sheaves is equivalent to $\set^2$, rather than $\set$. This immediately tells us that $\rt(A_0)\times \rt(A_1)$ is never equivalent to a realizability topos. It should be mentioned, however, that $(A_0,A_1)$ is an OPCA internal to the topos $\set^2$, and that constructing $\rt(A_0,A_1)$ over the base $\set^2$ rather than $\set$ does yield $\rt(A_0)\times \rt(A_1)$. See also the treatment in \cite{Z19}.

If we want to keep working over the base $\set$, on the other hand, then it makes more sense to take the pseudocoproduct over $\set$. That is, we consider the pseudopushout square
\[\begin{tikzcd}
\set \arrow[d, hook] \arrow[r, hook] \arrow[rd,hook] & \rt(A_0) \arrow[d, hook] \\
\rt(A_1) \arrow[r, hook]               & \mathcal{E}           
\end{tikzcd}\]
which always exists according to Proposition 4.26 from \cite{topos}. This proposition also tells us that the inverse image part of this diagram:
\[\begin{tikzcd}
\mathcal{E} \arrow[r] \arrow[d] \arrow[rd] & \rt(A_0) \arrow[d, "\hat{\Gamma}_{A_0}"] \\
\rt(A_1) \arrow[r, "\hat{\Gamma}_{A_1}"']   & \set                        
\end{tikzcd}\]
is a pseudo\emph{pullback} of categories. Because all displayed functors are regular, this is also a pseudopullback in $\reg$, as is not difficult to show. This means that the inverse image part $\mathcal{E}\to \set$ is the pseudoproduct of $\hat{\Gamma}_{A_0}$ and $\hat{\Gamma}_{A_1}$ in $\reg/\set$.

We finish the paper by determining when $\mathcal{E}$ above is itself a realizability topos. If $A_0$ is trivial, then the inclusion $\set\to \rt(A_0)$ is an equivalence, so in that case, we will have $\mathcal{E}\simeq \rt(A_1)$. Similarly, if $A_1$ is trivial, then $\mathcal{E}$ will be equivalent to the realizability topos over $A_0$. It turns out that these are the only cases in which $\mathcal{E}$ is a realizability topos.
\begin{prop}
Let $A_0$ and $A_1$ be OPCAs such that the pseudocoproduct of $\rt(A_0)$ and $\rt(A_1)$ over $\set$ is again a realizability topos. Then at least one of $A_0$ and $A_1$ is trivial.
\end{prop}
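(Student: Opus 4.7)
The plan is to reduce this proposition to \cref{thm:no_prod}. Suppose, aiming for a contrapositive argument, that $\mathcal{E}\simeq \rt(B)$ for some OPCA $B$. By the discussion preceding the statement, the inverse image part of $\mathcal{E}\to\set$ is the pseudoproduct of $\Gamma_{A_0}\colon \rt(A_0)\to \set$ and $\Gamma_{A_1}\colon \rt(A_1)\to \set$ in $\reg/\set$, so under the assumed equivalence we obtain that $\Gamma_B\colon \rt(B)\to \set$ is itself such a pseudoproduct.

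Next, I would transfer this pseudoproduct property back to $\sf{PCA}$ using \cref{thm:longley}. Since the local equivalence $\sf{PCA}\to\reg/\set$ is an equivalence on hom-categories, for every OPCA $C$ one obtains a natural chain of equivalences
\[
\sf{PCA}(C,B)\simeq \reg/\set(\Gamma_C,\Gamma_B)\simeq \reg/\set(\Gamma_C,\Gamma_{A_0})\times \reg/\set(\Gamma_C,\Gamma_{A_1})\simeq \sf{PCA}(C,A_0)\times \sf{PCA}(C,A_1),
\]
exhibiting $B$ as a pseudoproduct of $A_0$ and $A_1$ in $\sf{PCA}$. Now \cref{thm:no_prod} immediately forces at least one of $A_0$ and $A_1$ to be trivial, completing the proof.

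The main technical subtlety I anticipate is that \cref{thm:longley} is formulated using $\asm(A)\to \set$, whereas the pseudoproduct diagram provided by the preceding discussion lives over $\rt(A_i)\to \set$. Reconciling the two should use the universal property of $\rt(A)$ as the ex/reg completion of $\asm(A)$, together with the fact that $\set$ is exact and the inclusion $\asm(D)\mono \rt(D)$ is regular and fully faithful: this makes the homs $\reg/\set(\Gamma_C^{\asm},\Gamma_D^{\asm})$ and $\reg/\set(\Gamma_C^{\rt},\Gamma_D^{\rt})$ equivalent, so that the chain above is legitimate. Apart from this bookkeeping, there is nothing deep to verify: the heavy lifting has all been done in \cref{sec:prod_PCA}, and the current proposition is essentially a topos-theoretic translation of \cref{thm:no_prod}.
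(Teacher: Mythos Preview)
Your overall strategy matches the paper's proof exactly: reduce to \cref{thm:no_prod} by showing that $B$ is a pseudoproduct of $A_0$ and $A_1$ in $\sf{PCA}$, using the local equivalence of \cref{thm:longley}. However, there is one genuine gap in your argument that the paper does address.

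You write ``under the assumed equivalence we obtain that $\Gamma_B\colon \rt(B)\to \set$ is itself such a pseudoproduct''. But the hypothesis only gives an equivalence of \emph{toposes} $\mathcal{E}\simeq\rt(B)$; it says nothing about how the geometric morphism $\set\hookrightarrow\mathcal{E}$ from the pushout transports under this equivalence. A priori, the resulting inclusion $\set\hookrightarrow\rt(B)$ could be some geometric morphism other than the standard one (the one whose inverse image is $\Gamma_B$), in which case your chain of equivalences would not be available. The paper closes this gap by invoking Corollary~1.4 of \cite{johnstone}: there is, up to isomorphism, at most one geometric morphism $\set\to\rt(B)$, so the transported inclusion must coincide with the double negation inclusion, and its inverse image is forced to be $\Gamma_B$. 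You should insert this step.

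Conversely, the subtlety you \emph{do} flag---that \cref{thm:longley} is stated for $\asm(A)$ rather than $\rt(A)$---is a legitimate point, and the paper glosses over it. Your proposed fix via the ex/reg universal property and exactness of $\set$ is the right idea; the paper simply treats the passage as routine.
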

\begin{proof}
Suppose that the $\mathcal{E}$ constructed above is equivalent to $\rt(C)$ for some OPCA $C$. By Corollary 1.4 from \cite{johnstone}, there exists (up to isomorphism) at most one geometric morphism $\set\to\rt(C)$. In particular, $\set\mono\mathcal{E}\simeq \rt(C)$ is isomorphic to the inclusion of double negation sheaves. This means that the inverse image part $\rt(C)\to\set$ is isomorphic to $\hat{\Gamma}_C$, so we have a pseudopullback
\[\begin{tikzcd}
\rt(C) \arrow[d, "p_1"'] \arrow[r, "p_0"] \arrow[rd, "\hat{\Gamma}_C"] & \rt(A_0) \arrow[d, "\hat{\Gamma}_{A_0}"] \\
\rt(A_1) \arrow[r, "\hat{\Gamma}_{A_1}"']                              & \set                                    
\end{tikzcd}\]
of categories, where $p_i$ denotes the inverse image of $\rt(A_i)\mono \mathcal{E} \simeq \rt(C)$. By \cite{johnstone}, Lemma 2.4, such an inverse image functor always commutes with the constant object functors, i.e., we have $p_i\nabla_C\simeq \nabla_{A_i}$ for $i=0,1$.

An object $X$ of $\rt(C)$ is isomorphic to an assembly if and only if $X\to \nabla_C\hat{\Gamma}_C X$ is a monomorphism. By the pseudopullback diagram above, this is the case iff and $p_i X\to p_i\nabla_C\hat{\Gamma}_C X$ is mono for $i=0,1$. Since $p_i\nabla_C\hat{\Gamma}_C X \cong \nabla_{A_i}\hat{\Gamma}_{A_i}p_i X$, this is equivalent to saying that $p_iX$ is isomorphic an assembly, for $i=0,1$. So we also have a pseudopullback
\[
\begin{tikzcd}
\asm(C) \arrow[d] \arrow[r] \arrow[rd, "\Gamma_C"] & \asm(A_0) \arrow[d, "\Gamma_{A_0}"] \\
\asm(A_1) \arrow[r, "\Gamma_{A_1}"']               & \set                               
\end{tikzcd}
\]
of categories. But again, all the displayed functors are regular, so this is also a pseudopullback in $\reg$, meaning that $\Gamma_C$ is a pseudoproduct of $\Gamma_{A_0}$ and $\Gamma_{A_1}$ in $\reg/\set$.

This, together with \cref{thm:longley}, implies that for any OPCA $B$, we have natural equivalences:
\begin{align*}
\sf{PCA}(B,C) &\simeq (\reg/\set)(\Gamma_B,\Gamma_C)\\
&\simeq (\reg/\set)(\Gamma_B,\Gamma_{A_0})\times (\reg/\set)(\Gamma_B,\Gamma_{A_1})\\
&\simeq \sf{PCA}(B,A_0)\times\sf{PCA}(B,A_1),
\end{align*}
so $C$ is a pseudoproduct of $A_0$ and $A_1$ in $\sf{PCA}$. Applying \cref{thm:no_prod} finishes the proof.
\end{proof}

Even though the pushout $\mathcal{E}$ constructed above is not a realizability topos, we can ask how it is from being a realizability topos. The adjunctions $\pi_i\dashv \kappa_i$ between $A_i$ and $A_0\times A_1$ give rise to geometric inclusions $\rt(A_i)\mono \rt(A_0\times A_1)$. The pushout diagram above then also yields a geometric inclusion $\mathcal{E}\mono\rt(A_0\times A_1)$, so $\mathcal{E}$ is a subtopos of a realizability topos. We can wonder from which local operator on $\rt(A_0\times A_1)$ this subtopos $\mathcal{E}$ arises. Local operators on a realizability topos $\rt(B)$ arise from \emph{functions} $J\colon DB\to DB$ where $DB$ stands for the set of \emph{all} downsets of $B$ (including $\emptyset$), and $J$ should satisfy certain requirements analogous to the axioms for a local operator. For details, we refer to \cite{oostenlee}. In this particular case, the subtopos $\mathcal{E}$ arises from $J\colon D(A_0\times A_1)\to D(A_0\times A_1)$ defined by
\[
J(\alpha) = \{a_0\in A_0\mid \exists a_1\in A_1\s ((a_0,a_1)\in\alpha)\}\times \{a_1\in A_1\mid \exists a_0\in A_0\s ((a_0,a_1)\in\alpha)\},
\]
i.e., $J(\alpha)$ is the smallest `rectangular' subset of $A_0\times A_1$ containing $\alpha$. We can also describe this map by saying that $J(\alpha) = h_\ast(h^\ast(\alpha))$ for $\alpha \in T(A_0\times A_1)$ (with $h^\ast\dashv h_\ast$ as in the previous section), and $J(\emptyset) = \emptyset$.

\end{document}